\newtheorem{theorem}{Theorem}[section]
\newtheorem{corollary}[theorem]{Corollary}
\newtheorem{lemma}[theorem]{Lemma}
\newtheorem{conjecture}[theorem]{Conjecture}
\theoremstyle{definition}
\newtheorem{definition}[theorem]{Definition}
\newtheorem{notation}[theorem]{Notation}
\DeclareMathSymbol\unlhd{\mathrel}{lasy}{"02}
\def\Z#1{{\bf Z}(#1)}
\def\cent#1#2{{\bf C}_{#1}(#2)}
\def\norm#1#2{{\bf N}_{#1}(#2)}
\def\Aut{{\rm Aut}}
\def\Out{{\rm Out}}
\def\Cay{{\rm Cay}}
\def\Dic{{\rm Dic}}
\newcommand{\D}{\mathrm{D}}
\newcommand{\C}{\mathrm{C}}
\newcommand{\Q}{\mathrm{Q}}
\newcommand{\Sym}{\mathop{\mathrm{Sym}}}
\newcommand{\PSL}{\mathop{\mathrm{PSL}}}
\newcommand{\PGammaL}{\mathop{\mathrm{P}\Gamma\mathrm{L}}}
\newcommand{\PGL}{\mathop{\mathrm{PGL}}}
\renewcommand{\wr}{\mathop{\mathrm{wr}}}
\begin{document}

\title[Cayley graphs on generalised dicyclic groups]{Automorphisms of Cayley graphs on generalised dicyclic groups} 

\author[J. Morris]{Joy Morris}
\address{Joy Morris, Department of Mathematics and Computer Science,
University of Lethbridge, Lethbridge, AB. T1K 3M4. Canada}
\email{joy@cs.uleth.ca}

\author[P. Spiga]{Pablo Spiga}
\address{Pablo Spiga,
Dipartimento di Matematica e Applicazioni, University of Milano-Bicocca, Via Cozzi 53, 20125 Milano, Italy}\email{pablo.spiga@unimib.it}

\author[G. Verret]{Gabriel Verret}
\address{Gabriel Verret, Centre for Mathematics of Symmetry and Computation, School of Mathematics and Statistics, The University of Western Australia, 35 Stirling Highway, Crawley, WA 6009, Australia. \newline
\indent Also affiliated with : UP FAMNIT, University of Primorska, Glagolja\v{s}ka 8, 6000 Koper, Slovenia.}\email{gabriel.verret@uwa.edu.au}

\thanks{The first author is supported in part by the National Science and Engineering Research Council of Canada. The third author is supported by UWA as part of the Australian Research Council grant DE130101001.\\
 \indent Address correspondence to Pablo Spiga. (pablo.spiga@unimib.it)}

\subjclass[2010]{Primary 20B25; Secondary 05E18}
\keywords{Cayley graph, dicyclic group, graphical regular representation.} 

\begin{abstract}
A graph is called a \emph{GRR} if its automorphism group acts regularly on its vertex-set. Such a graph is necessarily a Cayley graph. Godsil has shown that there are only two infinite families of finite groups that do not admit GRRs : abelian groups and generalised dicyclic groups~\cite{Godsil}. Indeed, any Cayley graph on such a group admits specific additional graph automorphisms that depend only on the group. Recently, Dobson and the last two authors showed that almost all Cayley graphs on abelian groups admit no automorphisms other than these obvious necessary ones~\cite{DSV}. In this paper, we prove the analogous result for Cayley graphs  on the remaining family of exceptional groups: generalised dicyclic groups.
\end{abstract}
\maketitle

\section{Introduction}

In this paper, all groups considered are finite and all graphs are finite, undirected, and have no multiple edges. (They may have loops and they may be disconnected.) Let $R$ be a group and let $S$ be an inverse-closed subset of $G$. The \emph{Cayley graph} on $R$ with connection set $S$, denoted $\Cay(R,S)$, is the graph with vertex-set $R$ and with $\{g,h\}$ being an edge  if and only if $gh^{-1}\in S$. It is easy to check that $R$ acts regularly, by right multiplication, as a group of automorphisms of $\Cay(R,S)$. If, in fact, $R$ is the full automorphism group of $\Cay(R,S)$ then $\Cay(R,S)$ is called a \emph{GRR} (for graphical regular representation).

The most natural question concerning GRRs is to determine which groups admit GRRs. This question was answered by Godsil~\cite{Godsil}, after a long series of partial results by various authors (see~\cite{Hetzel,Imrich,NowWat} for example). 

It turns out that there are only two infinite families of groups which do not admit GRRs. The first family consists of abelian groups of exponent greater than two.  If $A$ is such a group and $\iota$ is the automorphism of $A$ mapping every element to its inverse then  every Cayley graph on $A$ admits $A\rtimes\langle\iota\rangle$ as a group of automorphisms. Since $A$ has exponent greater than $2$, $\iota\neq 1$ and hence no Cayley graph on $A$ is a GRR. 

The other groups that do not admit GRRs are the generalised dicyclic groups, which we now define.

\begin{definition}
Let $A$ be an abelian group of even order and of exponent greater than $2$, and let $y$ be an involution of $A$. The \emph{generalised dicyclic group} $\Dic(A,y,x)$ is the group $\langle A,x\mid x^2=y, a^x=a^{-1}, \forall a \in A\rangle$. A group is called \emph{generalised dicyclic} if it is isomorphic to some $\Dic(A,y,x)$. When $A$ is cyclic, $\Dic(A,y,x)$ is called a \emph{dicyclic} or \emph{generalised quaternion group}.
\end{definition}

The importance of generalised dicyclic groups in this context stems from the fact that, just like abelian groups, they admit a non-trivial group automorphism $\iota$ that maps every group element either to itself or to its inverse. More details on this and other basic facts concerning generalised dicyclic groups can be found in Subsection~\ref{Sec:GenDicyclic}. For the moment, it suffices to observe that the existence of this group automorphism $\iota$ implies that no Cayley graph on a generalised dicyclic group is a GRR.

As mentioned earlier, it was proved by Godsil that abelian and generalised dicyclic groups are the only two infinite families of groups which do not admit GRRs. The stronger conjecture that follows was made by Babai, Godsil, Imrich and Lov\'asz~\cite[Conjecture 2.1]{BaGo}.

\begin{conjecture}\label{graphmainconjecture}
Let $R$ be a group of order $n$ which is neither generalised dicyclic nor abelian. The proportion of inverse-closed subsets $S$ of $R$ such that $\Cay(R,S)$ is a GRR goes to $1$ as $n\to\infty$.
\end{conjecture}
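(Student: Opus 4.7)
The plan is to adapt the strategy of~\cite{DSV} for abelian groups to general $R$. Let $N(R)$ denote the number of inverse-closed subsets of $R$ and $M(R)$ the number of those for which $\Cay(R,S)$ fails to be a GRR; the target is $M(R)/N(R)\to 0$. Since $R$ is neither abelian nor generalised dicyclic, the ``obvious'' automorphism $\iota$ does not exist, so $\Cay(R,S)$ fails to be a GRR only if either (i) some $\varphi\in\Aut(R)\setminus\{1\}$ fixes $S$ setwise, or (ii) $\Aut(\Cay(R,S))$ contains a transitive overgroup of $R$ in which $R$ is not normal.

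For case~(i), fix $\varphi\in\Aut(R)\setminus\{1\}$: a $\varphi$-invariant inverse-closed subset is a union of $\langle\varphi\rangle$-orbits on the set of inverse-pairs of $R$, so there are at most $2^{f(\varphi)}$ such subsets, where $f(\varphi)$ is the number of orbits. A key step would be to show that the deficiency $I-f(\varphi)$ grows linearly in $|R|$ for every non-trivial $\varphi$, where $I$ is the total number of inverse-pairs; this is a quantitative version of the fact that, under the hypothesis on $R$, no non-trivial automorphism of $R$ sends every element either to itself or to its inverse. Summing $2^{f(\varphi)}$ over $\varphi\in\Aut(R)\setminus\{1\}$ and using that $|\Aut(R)|$ is at most quasipolynomial in $|R|$, the total contribution of case~(i) is $o(N(R))$.

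The main obstacle is case~(ii). I would apply the O'Nan--Scott theorem to a minimal transitive overgroup $G$ of $R$ in $\Aut(\Cay(R,S))$ in which $R$ is not normal, and analyse each quasiprimitive type. The affine and simple-diagonal types are closely tied to the excluded families and need to be ruled out; the almost-simple type can be controlled via CFSG-based bounds on orders of primitive groups together with counts of invariant subsets. The product-action case is the crux: when $R$ embeds into a wreath product and $S$ arises from a Cartesian-like decomposition, many non-GRRs appear simultaneously, and one must bound them uniformly in the shape of $R$. In my view, the principal reason the conjecture remains open is precisely the difficulty of designing a single inductive framework that treats all structural types of $R$, and all its possible direct-product decompositions, simultaneously, with error terms sharp enough to beat the $o(N(R))$ threshold uniformly.
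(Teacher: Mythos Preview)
The statement you are attempting to prove is labelled \emph{Conjecture} in the paper, and the paper does not prove it; indeed, it records that the only known case is the nilpotent odd-order result of Babai and Godsil. So there is no ``paper's own proof'' to compare your proposal against. What the paper actually proves (Theorems~\ref{Theo:main not Q} and~\ref{Theo:mainQ}) concerns the complementary situation: generalised dicyclic $R$, where it pins down the generic automorphism group as $R\rtimes\langle\iota\rangle$ (or the larger group $B$ when $R\cong\Q_8\times\C_2^\ell$).

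Your write-up is a research outline rather than a proof, and you say so yourself in the last paragraph. Two concrete technical points nonetheless deserve flagging. First, in case~(i) the assertion that $I-f(\varphi)$ grows \emph{linearly} in $|R|$ for every non-trivial $\varphi\in\Aut(R)$ is false in general: take $R=\C_p\times H$ with $p$ an odd prime not dividing $|H|$ and $\varphi$ the automorphism inverting the $\C_p$ factor and fixing $H$ pointwise; then $\varphi$ fixes every inverse-pair in $R$, so $f(\varphi)=I$. What is true (and is the content of Godsil's theorem) is only that some inverse-pair is moved; getting a linear deficit uniformly over all $\varphi$ is exactly the hard quantitative step, and it simply does not hold as stated. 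Second, your dichotomy is not the one used in the known partial results: the relevant split is between the normaliser of $R$ in $\Aut(\Cay(R,S))$ being strictly larger than $R$ (which gives case~(i)) and $R$ being self-normalising, and in the latter situation one studies a primitive quotient with point-stabiliser a \emph{quotient} of $R$, not a minimal transitive overgroup of $R$; your remark that ``affine and simple-diagonal types are closely tied to the excluded families'' does not reflect how the O'Nan--Scott analysis actually interacts with the hypothesis on $R$.
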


Conjecture \ref{graphmainconjecture} has been verified in the case that $R$ is nilpotent of odd order by  Babai and Godsil~\cite[Theorem 2.2]{BaGo}.

If $A$ is abelian then, as we remarked earlier, the smallest possible automorphism group of a Cayley graph on $A$ is $A\rtimes\langle\iota\rangle$. It is then natural to conjecture (as did Babai and Godsil~\cite[Remark~4.2]{BaGo}) that almost all Cayley graphs on $A$ have automorphism group $A\rtimes\langle\iota\rangle$. This conjecture was recently proved by Dobson and the last two authors. 

\begin{theorem}\label{th:dobson}{{ $($\cite[Theorem~1.5]{DSV}$)$}}
Let $A$ be an abelian group of order $n$. The proportion of inverse-closed subsets $S$ of $A$ such that $\Aut(\Cay(A,S)) = A\rtimes\langle\iota\rangle$ goes to $1$ as $n\to\infty$.
\end{theorem}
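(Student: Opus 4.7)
The plan is a counting argument. Let $N$ denote the number of inverse-closed subsets of $A$; a direct count gives $N=2^{(n+t)/2}$, where $t$ is the number of elements of $A$ of order at most $2$. The goal is to show that the number of \emph{bad} sets $S$, namely those with $\Aut(\Cay(A,S))$ strictly containing $A\rtimes\langle\iota\rangle$, is $o(N)$. I would split according to whether $A$ is a normal subgroup of $G:=\Aut(\Cay(A,S))$.

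\emph{Normal case.} If $A\trianglelefteq G$ then $G=A\rtimes H$ with $H=G_1$ embedded in $\Aut(A)$ and preserving $S$ setwise, and badness forces $H\supsetneq\langle\iota\rangle$. Since $\iota\in H$, inverse-closedness is automatic, so the number of $S$ fixed by $H$ is $2^{k(H)}$, where $k(H)$ counts $H$-orbits on $A$. For any such $H$, the presence of non-inverting automorphisms forces many $H$-orbits to have size at least $4$, yielding a bound $k(H)\le n/2-\varepsilon(H)n$ for some $\varepsilon(H)>0$. Summing $2^{k(H)}$ over a suitable (conjugacy) set of subgroups $H\leq\Aut(A)$ strictly containing $\langle\iota\rangle$, the total contribution is $o(N)$. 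Some care is needed for subgroups $H$ only slightly larger than $\langle\iota\rangle$ (e.g.\ generated by $\iota$ together with a single further involutory automorphism), which I would handle by sharper orbit counts exploiting the decomposition of $A$ as a product of its Sylow subgroups.

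\emph{Non-normal case.} If $A$ is not normal in $G$, then $G$ contains another regular subgroup $R\neq A$. Here I would invoke structural results on pairs of regular subgroups in automorphism groups of Cayley graphs, together with the theory of Cayley isomorphisms on abelian groups, to restrict the possibilities for $(A,R)$ and to extract strong constraints on $S$ (for example, that $S$ is a union of cosets of some proper subgroup of $A$, or is of a very restricted combinatorial form). Each resulting family of $S$ should be parametrised by few enough data that its cardinality is again $o(N)$.

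The main obstacle is precisely the non-normal case: producing a quantitatively sharp upper bound on the number of $S$ for which $A$ fails to be normal in $\Aut(\Cay(A,S))$ requires a careful structural analysis of when an abelian Cayley graph can admit a second regular subgroup, and of the combinatorial constraints this places on the connection set. By contrast, the normal case reduces to counting inverse-closed unions of orbits of subgroups of $\Aut(A)$ strictly containing $\langle\iota\rangle$, which is tractable via elementary orbit-size inequalities once the structure of $A$ is taken into account.
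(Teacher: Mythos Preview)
This theorem is not proved in the present paper: it is quoted verbatim from~\cite[Theorem~1.5]{DSV} and used as a black box (see the proof of Lemma~\ref{2blocks}, where the explicit bound $2^{m/2+11n/48+2(\log_2(n))^2+2}$ is pulled directly from~\cite{DSV}). There is therefore no proof here to compare your proposal against.

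As a standalone attempt, what you have written is a strategy outline rather than a proof, and the non-normal case is essentially empty. Saying you would ``invoke structural results on pairs of regular subgroups'' and ``the theory of Cayley isomorphisms'' to conclude that $S$ lies in some small family is not an argument: no such off-the-shelf result gives the conclusion you need, and this is exactly where the work lies. Even in the normal case, the inequality $k(H)\le n/2-\varepsilon(H)n$ with an unspecified $\varepsilon(H)>0$ depending on $H$ does not suffice, because you must sum $2^{k(H)}$ over \emph{all} subgroups $H$ of $\Aut(A)$ properly containing $\langle\iota\rangle$, and there can be superpolynomially many of them with $\varepsilon(H)$ arbitrarily small; you need a uniform quantitative saving, not a pointwise one. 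Judging by how the present paper proceeds in the parallel dicyclic case (Theorem~\ref{Theo:main not Q}), the actual argument in~\cite{DSV} does not hinge on the normal/non-normal dichotomy for $A$ in $G$, but rather takes a subgroup $G$ with $B=A\rtimes\langle\iota\rangle$ maximal in $G$, passes to the primitive quotient $G/N$ with point-stabiliser $B/N$, and uses the classification of primitive groups with abelian point-stabiliser together with block-system counting lemmas to obtain a uniform explicit exponent loss.
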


We now turn our attention to the other exceptional family in Conjecture~\ref{graphmainconjecture}, namely the family of generalised dicyclic groups. The situation in this case is slightly more delicate, as this exceptional family contains an even more exceptional sub-family, as we now explain.

Recall that, if $R$ is a generalised dicyclic group then every Cayley graph on $R$ admits $R\rtimes\langle\iota\rangle$ as a group of automorphisms. One might be tempted to conjecture that, as in the abelian case, almost all Cayley graphs on $R$ have $R\rtimes\langle\iota\rangle$ as their full automorphism group. It turns out that this is not the case. Indeed, if $R\cong \Q_8\times \C_2^\ell$ (where $\Q_8$ denotes the quaternion group of order $8$ and $\C_2$ the cyclic group of order $2$) then there exists a permutation group $B$ containing $R$ as a regular subgroup of index $8$ and such that every Cayley graph on $R$ admits $B$ as a group of automorphisms. (See Notation~\ref{Q-notation} for the definition of $B$ and Lemma~\ref{lemma42} for a proof of this fact.)

Our main result is that almost all Cayley graphs on generalised dicyclic groups have automorphism group as small as possible, in the sense of the previous paragraph. More precisely, we prove the following.

\begin{theorem}\label{th:main}
Let $R$ be a generalised dicyclic group of order $n$, let $B=R\rtimes\langle\iota\rangle$ if $R\ncong \Q_8\times \C_2^\ell$ and let $B$ be as in Notation~$\ref{Q-notation}$ if $R\cong \Q_8\times \C_2^\ell$. The proportion of inverse-closed subsets $S$ of $R$ such that $\Aut(\Cay(R,S)) = B$ goes to $1$ as $n\to\infty$.
\end{theorem}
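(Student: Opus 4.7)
\emph{Setup and reduction.} The plan is to mirror the abelian case of Theorem~\ref{th:dobson}. Since $\iota$ fixes $A$ pointwise and inverts each element of $R\setminus A$, every inverse-closed $S\subseteq R$ is $\iota$-invariant, so $B\le\Aut(\Cay(R,S))$ holds for every inverse-closed $S$; the analogous fact in the exceptional quaternion case is the content of Lemma~\ref{lemma42}. Therefore $\Aut(\Cay(R,S))\ne B$ precisely when some transitive overgroup $G$ of $B$ in $\Sym(R)$ preserves $S$, i.e.\ when $S$ is invariant under a point stabiliser $G_1\supsetneq B_1$. Letting $N_0$ be the total number of inverse-closed subsets of $R$ and $N_G$ the number of inverse-closed subsets that are also $G_1$-invariant, a union bound reduces the theorem to showing
\[
\sum_{G>B}\frac{N_G}{N_0}\longrightarrow 0,
\]
the sum running over transitive overgroups $G$ of $B$ in $\Sym(R)$.

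\emph{Classification of overgroups.} My next step would be to classify transitive overgroups $G>B$ according to whether $R$ is normal in $G$. If $R\unlhd G$, then $G$ lies in the holomorph $R\rtimes\Aut(R)$ and $G_1$ is identified with a subgroup of $\Aut(R)$ properly containing $\iota$; the analysis then reduces to understanding the non-trivial group automorphisms of $R$ outside $\langle\iota\rangle$, using the structural description of $\Aut(R)$ for generalised dicyclic $R$ alluded to in Subsection~\ref{Sec:GenDicyclic}. If $R$ is non-normal in $G$, I would invoke classical results on transitive permutation groups containing a regular subgroup; this regime is responsible for the exceptional family $R\cong\Q_8\times\C_2^\ell$ of Notation~\ref{Q-notation}, and outside that exception should contribute only negligibly.

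\emph{Orbit-merging and counting.} The central technical estimate will be that any non-identity element of $G_1\setminus B_1$ displaces a positive proportion of the elements of $R$, thereby fusing $\Omega(n)$ of the orbits that delimit the choice of an inverse-closed subset. This yields $N_G/N_0\le 2^{-cn}$ for some absolute $c>0$. A complementary counting must then show that the number of conjugacy classes of transitive overgroups $G>B$ one needs to consider is at most subexponential in $n$; the displayed sum then tends to $0$.

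\emph{Main obstacle.} The hardest part will be the exceptional case $R\cong\Q_8\times\C_2^\ell$. Here $\Aut(R)$ is very large, and the base group $B$ is itself a non-split regular extension of $R$ of index $8$ rather than the expected $R\rtimes\langle\iota\rangle$, so several of the standard arguments must be adapted. One must both verify that $B\le\Aut(\Cay(R,S))$ holds for every inverse-closed $S$ (Lemma~\ref{lemma42}) and show that the orbit-merging estimate persists for every transitive overgroup strictly containing this larger $B$. I expect the bulk of the paper's case analysis to be devoted to handling this exceptional family cleanly, while preserving the uniform union-bound argument for all other generalised dicyclic groups.
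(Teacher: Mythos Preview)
Your high-level reduction (every inverse-closed $S$ already has $B$ in its automorphism group, so one must bound the number of $S$ admitting a strictly larger group) matches the paper, but from that point on the two arguments diverge substantially, and your plan has gaps that the paper's approach is designed to avoid.

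First, the paper does \emph{not} run a union bound over transitive overgroups $G>B$, nor does it classify them by whether $R\unlhd G$. There is no available subexponential bound on the number of such overgroups, so the step ``the number of conjugacy classes of transitive overgroups one needs to consider is at most subexponential'' is exactly the missing idea. Instead the paper fixes a single $G$ with $B$ \emph{maximal} in $G$ and then branches on structural data internal to $B$: in the generic case (Section~\ref{sec: Not QxC2}) the dichotomy is whether the distinguished involution $y$ is central in $G$; in the $\Q_8\times\C_2^\ell$ case (Section~\ref{sec:Q8 times E}) it is whether $M=\Z B$ is normal in $G$. In each branch one extracts a small amount of auxiliary data (a subgroup $U\le A$, a pair of $M$-cosets, a single permutation $z$, etc.), and it is the number of choices for \emph{that data} that is bounded by $2^{o(n)}$, not the number of overgroups. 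The ``displacement'' savings you anticipate is then obtained for each fixed configuration, not for each fixed $G$.

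Second, the technical weight sits opposite to where you expect. The $\Q_8\times\C_2^\ell$ case is handled by a fairly direct analysis once one knows Corollary~\ref{cor3} (primitive groups with stabiliser of exponent dividing $4$ are affine). The generic case is the harder one: when $y$ is not central, one passes to the primitive quotient $\overline G=G/\mathrm{core}_G(B)$ with point-stabiliser $\overline B$, and the crux is Theorem~\ref{theoPrimitive}, proved in Section~\ref{sec:Proof of Theo} using the classification of primitive groups with soluble stabilisers, odd-degree primitive groups, and the Brauer--Suzuki and Walter theorems. None of this machinery is visible in the holomorph/``$R$ normal'' framework you propose, and the holomorph route gives no leverage precisely in the non-normal case where the paper's primitive-quotient argument does the work.
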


Theorem~\ref{th:main} immediately follows from Theorems~\ref{Theo:main not Q} and~\ref{Theo:mainQ}. In our proof of Theorem~\ref{th:main} we do not make any effort to keep track of the error terms in our estimates. By being more careful, one may obtain the following two more explicit versions of Theorem~\ref{th:main}.

\begin{theorem}\label{th:epsilon1}
Let $R$ be a generalised dihedral group of order $n$ with $R\ncong \Q_8\times \C_2^\ell$ and let $m$ be the number of elements of order at most $2$ of $R$. Then $R$ has $2^{m/2+n/2}$ inverse-closed subsets and the number of inverse-closed subsets $S$  with $\Aut(\Cay(R,S))>R\rtimes\langle\iota\rangle$ is at most $(2^{m/2+n/2})\cdot \varepsilon$ where 
\begin{eqnarray*}
\varepsilon&=& 2^{-n/48+2(\log_2(n))^2+4}.
\end{eqnarray*}
\end{theorem}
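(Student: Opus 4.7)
I plan to derive Theorem~\ref{th:epsilon1} by tracking error terms through the proof of Theorem~\ref{Theo:main not Q}, the non-$\Q_8\times\C_2^\ell$ case of Theorem~\ref{th:main}. The first claim is elementary; the second is a quantitative sharpening of the main estimate.

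For the enumeration, partition $R$ by inverses: the $m$ elements of order at most $2$ (the identity together with the $m-1$ involutions) are self-inverse and contribute $m$ independent binary choices for membership in $S$, while the remaining $n-m$ elements split into $(n-m)/2$ inverse-pairs, each contributing a single binary choice. Multiplying gives $2^{m}\cdot 2^{(n-m)/2}=2^{(n+m)/2}$, confirming the first assertion.

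For the upper bound on bad $S$, set $B=R\rtimes\langle\iota\rangle$ and suppose $\Aut(\Cay(R,S))>B$. Then the stabiliser of the vertex $1$ in $\Aut(\Cay(R,S))$ strictly contains $\langle\iota\rangle$, so there is some $\sigma\in \Aut(\Cay(R,S))_1\setminus\langle\iota\rangle$. Since $\sigma$ setwise preserves $S$ (it maps $1$-neighbours to $1$-neighbours) and $S$ is inverse-closed, $S$ is a union of orbits of $\langle\sigma,\iota\rangle$ acting on $R$. Hence, for fixed $\sigma$, the number of inverse-closed $S$ for which $\sigma$ extends to a graph automorphism is at most $2^{q(\sigma)}$, where $q(\sigma)$ is the number of $\langle\sigma,\iota\rangle$-orbits on $R$.

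The strategy is then to enumerate the possible $\sigma$ in tractable families and bound $q(\sigma)$ uniformly within each family. The candidates split between group automorphisms of $R$, whose number is bounded by $|\Aut(R)|\le 2^{(\log_2 n)^2}$, and further ``non-group-theoretic'' obstructions controlled via the normal subgroup lattice of $R$, again at most $2^{(\log_2 n)^2}$; the total contributes a factor $2^{2(\log_2 n)^2}$, with the additive $4$ absorbing small constants from the case enumeration. The principal obstacle is establishing the explicit bound $q(\sigma)\le (n+m)/2 - n/48$ in the worst case: since $R\not\cong\Q_8\times\C_2^\ell$, the structural lemmas underpinning Theorem~\ref{Theo:main not Q} force any such $\sigma$ to merge many $\langle\iota\rangle$-orbits, and extracting the sharp constant $1/48$ requires a careful case split on the structure of $R=\Dic(A,y,x)$, in particular on the $2$-rank of $A$, since the worst case is driven by the generalised dicyclic groups with $A$ closest to being a $2$-group.
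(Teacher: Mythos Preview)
Your enumeration of inverse-closed subsets is fine, and your opening sentence correctly identifies what the paper actually does: Theorem~\ref{th:epsilon1} has no separate proof in the paper---it is obtained by tracking the $o(n)$ terms through the proof of Theorem~\ref{Theo:main not Q}. But the argument you then sketch is \emph{not} that tracking; it is a different and incomplete approach.

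The paper never enumerates individual permutations $\sigma\in\Aut(\Cay(R,S))_1\setminus\langle\iota\rangle$ and bounds their orbit counts. Instead it fixes a subgroup $G$ of $\Aut(\Cay(R,S))$ containing $B$ as a maximal subgroup and splits according to whether $y$ is central in $G$. In the central case it applies Lemma~\ref{2blocks} or constructs a specific involution $g$ depending on $U=\langle\{s\in S:sy\notin S\}\rangle$; in the non-central case it passes to the primitive quotient $G/N$ and invokes Theorem~\ref{theoPrimitive} (which in turn rests on CFSG via~\cite{LiZ2011,Liebeck}) to force affine type, then applies Lemma~\ref{2blocks} again. The bottleneck $23n/48=n/2-n/48$ arises inside Lemma~\ref{2blocks}, and the $2(\log_2 n)^2$ comes from the subgroup and automorphism counts in Lemma~\ref{counting subgroups} together with the explicit bound quoted there from~\cite{DSV}.

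Your sketch has two genuine gaps. First, the claim that the relevant $\sigma$ fall into at most $2^{2(\log_2 n)^2}$ ``families'' is unjustified: a priori $\sigma$ is an arbitrary permutation of $R$ fixing $1$, and nothing you have said restricts it to group automorphisms or to anything indexed by the subgroup lattice. The paper gets around this precisely by working with the group $G$ rather than with a single $\sigma$. Second, the uniform orbit bound $q(\sigma)\le (n+m)/2-n/48$ is asserted but not argued; the ``structural lemmas underpinning Theorem~\ref{Theo:main not Q}'' are statements about blocks of imprimitivity and quotients of $G$, not about orbit structures of individual permutations, and they do not transfer in the way you suggest. If you want Theorem~\ref{th:epsilon1}, follow the actual proof of Theorem~\ref{Theo:main not Q} and replace each $o(n)$ by its explicit source.
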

\begin{theorem}\label{th:epsilon2}
Assume Notation~$\ref{Q-notation}$. Then $R\cong \Q_8\times \C_2^\ell$, $|R|=n$, $R$ has $2^{5n/8}$ inverse-closed subsets and the number of inverse-closed subsets $S$  with $\Aut(\Cay(R,S))>B$ is at most $(2^{5n/8})\cdot \varepsilon$ where 
\begin{eqnarray*}
\varepsilon&=&2^{-n/512+(\log_2(n))^2+2}.
\end{eqnarray*}
\end{theorem}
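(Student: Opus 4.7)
The count $2^{5n/8}$ is immediate: $R\cong \Q_8\times \C_2^\ell$ has exactly $n/4$ elements of order at most $2$, so an inverse-closed subset $S$ is specified by $2^{n/4}$ binary choices on these elements and $2^{(n-n/4)/2}=2^{3n/8}$ binary choices on the inverse pairs of the remaining elements, for a total of $2^{n/4+3n/8}=2^{5n/8}$.

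The plan for the main estimate is to follow the proof of Theorem~\ref{Theo:mainQ} while tracking all constants. Since $R$ is regular in $B$, one has $\Aut(\Cay(R,S))>B$ if and only if $\Aut(\Cay(R,S))_1>B_1$. For each bad $S$ pick a minimal transitive overgroup $M$ of $B$ inside $\Aut(\Cay(R,S))$. Such an $S$ is a union of orbits of $\langle M_1,\iota\rangle$ on $R$, so the number of inverse-closed $S$ compatible with $M$ is at most $2^{k(M)}$, where $k(M)$ is the number of such orbits. (For the trivial choice $M=B$ one gets $k(B)=5n/8$, consistent with every inverse-closed subset being $B$-invariant.) A union bound then gives the count of bad $S$ as at most $\sum_M 2^{k(M)}$, where $M$ ranges over the minimal transitive overgroups of $B$ in $\Sym(R)$ that can arise in this way.

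Producing the stated $\varepsilon$ reduces to two quantitative estimates. One must bound the number of relevant $M$ by at most $2^{(\log_2 n)^2+2}$---via a structural classification of transitive subgroups of $\Sym(R)$ containing $R$ regularly, using that $|B_1|=8$ and that $M$ is generated by $B$ plus a single additional permutation of controlled type---and one must prove $k(M)\leq 5n/8-n/512$ for every such $M$. I expect the second estimate to be the main obstacle: the hardest case is that of $\sigma\in M_1\setminus B_1$ acting with many short cycles, which could a priori preserve nearly all the $\langle B_1,\iota\rangle$-orbits on $R\setminus\{1\}$. One must exploit the specific structure of Notation~\ref{Q-notation}---notably that the order-$8$ stabilizer $B_1$ already saturates the symmetries coming from the $\Q_8$ factor---to show that insufficient orbit fusion would force $\sigma$ back into $B_1$, contradicting the choice of $\sigma$. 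Carefully combining these two estimates produces the additive correction $(\log_2 n)^2+2$ and the leading coefficient $1/512$ in the exponent of $\varepsilon$.
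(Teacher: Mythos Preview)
Your first paragraph and your one-line plan (``follow the proof of Theorem~\ref{Theo:mainQ} while tracking all constants'') are exactly what the paper intends: Theorem~\ref{th:epsilon2} has no separate proof in the paper, it is obtained by making the $o(n)$ terms in the proof of Theorem~\ref{Theo:mainQ} explicit. However, the specific strategy you then outline is \emph{not} how that proof goes, and the two quantitative claims you isolate are not the right ones.

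The paper never enumerates minimal overgroups $G$ of $B$ and never bounds their number; indeed, bounding the number of subgroups $G\leq\Sym(R)$ with $B$ maximal in $G$ by $2^{(\log_2 n)^2+2}$ is neither proved nor plausible without essentially doing the whole case analysis first. (Also beware: $M$ already denotes $\langle -1\rangle\times E$ in Notation~\ref{Q-notation}, and $\iota$ is not defined in this section---the stabiliser $B_1=\langle\alpha_i,\alpha_j,\alpha_k\rangle$ already contains the inverting map.) What the paper actually does is fix a bad $S$, choose $G$ with $B$ maximal in $G\leq\Aut(\Cay(R,S))$, and split into cases according to whether $M=\Z B$ is normal in $G$. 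In each subcase it enumerates a small amount of \emph{auxiliary data} (an automorphism $\phi\in\Aut(M)$, or a triple $(q_i,q_j,q_k)\in M^3$, or a pair of cosets as in Lemma~\ref{win across blocks}) and, for each choice of that data, bounds the number of compatible $S$. The $(\log_2 n)^2$ in $\varepsilon$ comes from bounding $|\Aut(M)|$ and the number of subgroups via Lemma~\ref{counting subgroups}; the $n/512$ comes specifically from the last subcase, where primitive-group theory (Corollary~\ref{cor3} and Lemma~\ref{lemma:cycliccenter}) forces $|R:H|\leq 256$ for $H=M\cap M^g$, and then Lemma~\ref{win across blocks} gives the saving $|H|/2\geq n/512$.

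So the gap is that your union bound over overgroups replaces the paper's case-by-case double count with a single global enumeration whose first factor you cannot control. To get the explicit $\varepsilon$, you should instead go through the four subcases of Theorem~\ref{Theo:mainQ}, replace every ``$2^{o(n)}$'' by its source ($n^{\log_2 n}$, $|M|^3$, or $(n/|H|)^2|H|$), and take the worst of the resulting exponents.
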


We now give a brief summary of the rest of the paper. In Section~\ref{sec:Preliminaries}, we establish some basic preliminary results. The case $R \ncong\Q_8\times\C_2^\ell$ of Theorem~\ref{th:main} is dealt with in Section~\ref{sec: Not QxC2} while the case $R\cong \Q_8\times \C_2^\ell$ is proven in Section~\ref{sec:Q8 times E}. In Section~\ref{sec:Sec1proofs}, we show that the corresponding version of our results for unlabelled graphs easily follows, and give a version of our results for Cayley digraphs. Finally, we use Theorem~\ref{theoPrimitive} in Section~\ref{sec: Not QxC2} but its proof is technical and of a different flavor than the rest of the paper, so it is delayed to Section~\ref{sec:Proof of Theo}.

\section{Preliminaries} \label{sec:Preliminaries}

Throughout the paper, we denote by $\C_n$ a cyclic group of order $n$, by $\D_n$ a dihedral group of order $2n$ and by $\Q_8$ the quaternion group of order $8$. We say that a group $D$ is a {\em generalised dihedral group} on $A$ if $A$ is an abelian subgroup of $D$ of index $2$ and there exists an involution $w\in D\setminus A$ with $a^w=a^{-1}$ for every $a\in A$.

\subsection{Generalised dicyclic groups}\label{Sec:GenDicyclic}
We now establish some basic properties of generalised dicyclic groups. A reader familiar with these groups can probably skip this subsection with little loss.

\begin{notation}\label{notation1}
Let $A$ be an abelian group of even order and of exponent greater than $2$, let $y$ be an involution in $A$ and let $R=\Dic(A,y,x)$.  Let $\iota:R\to R$ be the permutation of $R$ fixing $A$ pointwise and mapping every element of $R\setminus A$ to its inverse.
\end{notation}

We first mention a few  basic properties of $R$ which will be used repeatedly and without comment. (The proofs follow immediately from the definitions.)

\begin{lemma}\label{basic-R}
Assume Notation~$\ref{notation1}$. Then the following hold.
\begin{enumerate}
\item $\iota$ is an automorphism of $R$.
\item $\langle y\rangle$ is a characteristic subgroup of $R$. \label{yeye}
\item Every element in $R\setminus A$ has order $4$ and squares to $y$.
\item Every subgroup of $R$ is either abelian or generalised dicyclic.
\item The centre of $R$ consists of the elements of $A$ of order at most $2$.
\item $R\cong \Q_8 \times \C_2^\ell$ if and only if $A\cong \C_4\times \C_2^\ell$ and $y$ is the unique non-identity square in $A$. 
\end{enumerate}
\end{lemma}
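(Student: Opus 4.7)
The plan is to verify the six items essentially in the order $(3),(1),(5),(4),(2),(6)$, each exploiting the defining relations $x^2=y$ and $a^x=a^{-1}$ for $a\in A$. The first few are direct calculations. For (3): any $g\in R\setminus A$ has the form $g=ax$, and $g^2=a(xax^{-1})x^2=a\cdot a^{-1}\cdot y=y$, so $g$ has order exactly $4$ since $y$ is an involution. This makes (1) mechanical: using (3) we have $g^{-1}=yg$ for $g\in R\setminus A$, so for $g,h\in R\setminus A$, $\iota(g)\iota(h)=yg\cdot yh=gh=\iota(gh)$ (last equality since $gh\in A$), and the three other coset combinations are handled similarly using $a^x=a^{-1}$; bijectivity is immediate since $\iota$ is visibly an involution on $R$. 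For (5): if $z\in Z(R)$ lay in $R\setminus A$, then conjugation by $z$ would invert $A$ pointwise (same action as $x$), forcing $A$ to have exponent at most $2$, a contradiction; within $A$, commuting with $x$ means being both inverted and fixed by $x$, hence having order at most $2$, and conversely every involution of $A$ is central. For (4): given $H\leq R$ with $H\not\leq A$, pick $g\in H\setminus A$ and set $B=H\cap A$, abelian of index $2$ in $H$; if $B$ has exponent at most $2$ then $gbg^{-1}=b^{-1}=b$ for all $b\in B$, making $H$ abelian, and otherwise $g^2=y\in B$ gives $H=\Dic(B,y,g)$.

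The slightly subtler items are (2) and (6). For (2) the plan is to characterize $y$ in an automorphism-invariant way by counting square roots. By (3), the set $\{g\in R:g^2=y\}$ contains all of $R\setminus A$, so has size at least $|A|$. On the other hand, for any $z\ne y$ in $R$, $\{g\in R:g^2=z\}$ is contained in $A$ (since everything outside $A$ squares to $y$) and is either empty or a coset of $A[2]:=\{a\in A:a^2=1\}$, hence of size at most $|A[2]|$. Because $A$ has exponent greater than $2$, squaring on $A$ has nontrivial image, so $|A[2]|<|A|$. Thus $y$ is the unique element of $R$ having the maximum number of square roots, so $\langle y\rangle$ is preserved by every automorphism.

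For (6), the direction $(\Leftarrow)$ is an internal direct product: write $A=\langle z\rangle\times E$ with $|z|=4$, $z^2=y$, and $E\cong \C_2^\ell$. Then $\langle z,x\rangle=\Dic(\langle z\rangle,y,x)\cong\Q_8$, while $E\leq A[2]\leq Z(R)$ by (5), and $\langle z,x\rangle\cap E=\{1\}$ since the only element of order at most $2$ in $\langle z,x\rangle$ is $y\notin E$; hence $R=\langle z,x\rangle\times E\cong \Q_8\times\C_2^\ell$. For $(\Rightarrow)$, suppose $R\cong\Q_8\times\C_2^\ell$. Then $|A|=2^{\ell+2}$, and by (5), $|A[2]|=|Z(R)|=2^{\ell+1}$. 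Applying the fundamental theorem of finite abelian groups, the number of cyclic factors of $A$ equals $\ell+1$ while their orders sum (in exponents of $2$) to $\ell+2$, forcing $A\cong \C_4\times\C_2^\ell$; the unique nontrivial square in such a group is the element of order $2$ in the $\C_4$ factor, which must therefore be $y$.

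The only step that is not a direct unwinding of the presentation is (2), and that is the main (mild) obstacle, since one must avoid using the specific generator $x$ and instead pin down $y$ by a property stable under all automorphisms; the square-root counting argument above accomplishes this.
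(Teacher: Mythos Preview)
The paper itself gives no proof of this lemma beyond the remark that ``the proofs follow immediately from the definitions,'' so there is nothing substantive to compare against. Your detailed verification is correct and follows exactly the natural route one would take from the presentation $\langle A,x\mid x^2=y,\ a^x=a^{-1}\rangle$; the ordering $(3)\to(1)\to(5)\to(4)\to(2)\to(6)$ is sensible, and your square-root counting argument for (2) is a clean way to make $\langle y\rangle$ visibly automorphism-invariant without reference to a particular choice of $x$.

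One small point in (6)$(\Rightarrow)$: after deducing $A\cong\C_4\times\C_2^\ell$, you assert that the unique non-identity square in $A$ ``must therefore be $y$,'' but the preceding sentences do not quite justify this. The missing observation is that in $\Q_8\times\C_2^\ell$ every element of order $4$ squares to the same element (namely $(-1,1)$), so in particular the generator of the $\C_4$ factor of $A$ and any element of $R\setminus A$ have the same square; since the latter square to $y$ by (3), so does the former, and hence $y$ is the unique non-identity square in $A$. With that one-line addition the argument is complete.
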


We also prove some slightly less trivial results.

\begin{lemma}\label{BasicDicyclic}
Assume Notation~$\ref{notation1}$ and $R\ncong \Q_8 \times \C_2^\ell$.
\begin{enumerate}
\item Let $b\in A$ and let $X=\{a\in A\mid a^2\in\{b,by\}\}$. Then $|X|\leq 2|A|/3$. \label{yiyi}
\item Let $U<A$ and let $X=\{a\in A\mid a\notin U, a^2\neq y\}$. Then $|X|\geq |A|/4$.\label{yaya}
\end{enumerate}
\end{lemma}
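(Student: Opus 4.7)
The plan for part (1) is to note that squaring defines a homomorphism $A\to A$ whose kernel $A[2]:=\{a\in A: a^2=1\}$ is such that each non-empty fibre of squaring is a coset of $A[2]$; in particular $|X|\leq 2|A[2]|$. If $|A[2]|\leq |A|/3$ we are done, so we only need to handle $|A[2]|>|A|/3$. Since $A$ has exponent greater than $2$, $|A[2]|\leq |A|/2$, and since $|A[2]|$ divides $|A|$ the only remaining possibility is $|A[2]|=|A|/2$, equivalently $|A^2|=2$. Decomposing $A=B\times C$ with $B$ its $2$-part and $C$ its odd part and using $|B^2|\cdot|C|=2$ together with the fact that $|B^2|$ is a power of $2$, one forces $|C|=1$ and $|B^2|=2$; a quick primary decomposition of $B$ then yields $A\cong \C_4\times \C_2^\ell$. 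The unique non-identity square of $A$ then exists, and Lemma~\ref{basic-R}(6) together with the hypothesis $R\ncong \Q_8\times \C_2^\ell$ forces this square to differ from $y$, so $y\notin A^2$. This is the crux: $b$ and $by$ cannot both lie in $A^2$, since their quotient is $y$. Hence $|X|\leq |A[2]|=|A|/2\leq 2|A|/3$.

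For part (2), set $T=\{a\in A: a^2=y\}$, so $|X|\geq |A|-|U|-|T|$. Since $U<A$ one has $|U|\leq |A|/2$, and $T$ is either empty or a coset of $A[2]$, giving $|X|\geq |A|/2-|A[2]|$; this already yields the bound when $|A[2]|\leq |A|/4$. Otherwise $|A[2]|$ is a divisor of $|A|$ lying in $(|A|/4,|A|/2]$, so $|A[2]|\in\{|A|/2,|A|/3\}$. In the first case the argument of (1) gives $y\notin A^2$, hence $T=\emptyset$. In the second case $|A^2|=3$, and the same primary-decomposition argument forces $A\cong \C_2^k\times \C_3$; then $A^2$ is a group of odd order $3$, which contains no involution, so $y\notin A^2$ and again $T=\emptyset$. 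In either subcase $|X|\geq |A|/2\geq |A|/4$.

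The only non-trivial ingredient is the structural classification of $A$ in the two extremal cases $|A^2|\in\{2,3\}$, and both follow from an easy primary decomposition, so I do not foresee a genuine obstacle. The hypothesis $R\ncong \Q_8\times \C_2^\ell$ is used exactly once, to rule out the configuration $A\cong \C_4\times \C_2^\ell$ with $y$ equal to the unique non-identity square; in that excluded case one can take $b=1$ to obtain $|X|=2|A[2]|=|A|$, showing that the bound in (1) would otherwise genuinely fail.
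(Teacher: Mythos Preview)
Your proof is correct and follows essentially the same approach as the paper: both arguments use that the fibres of squaring are cosets of $A[2]$, reduce to the extremal cases $|A:A[2]|\in\{2,3\}$, identify $A$ in those cases via its primary decomposition, and then use the hypothesis $R\ncong \Q_8\times\C_2^\ell$ (via Lemma~\ref{basic-R}(6)) to conclude that $y$ is not a square. The only cosmetic difference is that in part~(\ref{yaya}) the paper splits first on whether $y$ is a square and then on $|A:A[2]|$, whereas you bound $|X|\geq |A|/2-|A[2]|$ first and then handle the small-index cases by showing $T=\emptyset$; the content is the same.
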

\begin{proof}
Let $A_2=\{a\in A\mid a^2=1\}$. As $A$ is abelian, if $a_1,a_2\in A$ and $a_1^2=a_2^2$ then $(a_1a_2^{-1})^2=1$.  It follows that $A_2$ is a subgroup of $A$ and, since $A$ has exponent greater than $2$, in fact $A_2<A$. Moreover, it also follows that $\{a\in A\mid a^2=b\}$ and $\{a\in A\mid a^2=by\}$ are either empty or have cardinality $|A_2|$. In particular,~(\ref{yiyi}) follows when $|A:A_2|\geq 3$. We thus assume that $|A:A_2|\leq 2$ hence $|A:A_2|=2$ and $A\cong \C_4\times \C_2^\ell$.  Since $R\ncong \Q_8 \times \C_2^\ell$, $y$ is not a square in $A$. It follows that at most one of $b$ and $by$ is a square in $A$ and thus $|X|\leq |A|/2$. This concludes the proof of~(\ref{yiyi}).

If $y$ is not a square in $A$, then $X=A\setminus U$ and~(\ref{yaya}) follows immediately. We may thus assume that $y=z^2$ for some $z\in A$. The set of elements of $A$ which square to $y$ is exactly $zA_2$. We must thus show that $|U\cup zA_2|\leq 3|A|/4$. If $|A:A_2|\geq 4$ then $|U\cup zA_2|\leq |U|+|zA_2|\leq |A|/2+|A|/4=3|A|/4$. If $|A:A_2|=3$ then $A \cong  \C_3\times \C_2^\ell$, contradicting the fact that $A$ contains the element $z$ of order $4$. We may thus assume that $|A:A_2|=2$. It follows that $A\cong \C_4\times \C_2^\ell$ and, since $y$ is a square in $A$, we get $R\cong \Q_8\times \C_2^\ell$, which is a contradiction.
\end{proof}

\subsection{Primitive groups}
In this subsection, we recall some basic facts about primitive permutation groups. For terminology regarding the types of primitive groups, we follow~\cite{ONAN}.  Three types of primitive groups will be particularly important in this paper.

Let $G$ be a primitive permutation group. The group $G$ is of {\em affine type} if it contains a regular elementary abelian $p$-group $T$. In this case, $G_1$ acts faithfully and irreducibly on $T$, which is the unique minimal normal subgroup of $G$. The group $G$ is of {\em almost simple type} if $T \le G \le \Aut(T)$ for some non-abelian simple group $T$. Finally, $G$ is of \emph{product action type} if $G$ is a subgroup of the wreath product $H \wr \Sym(l)$ endowed with its natural action on $\Delta^l$ with $l\geq 2$ and $H$ an almost simple primitive group on $\Delta$. Furthermore, if $T$ is the socle of $H$ then $G$ has a unique minimal normal subgroup $N$ and $N=T_1\times\cdots\times T_l$ where $T_i\cong T$ for every $i\in\{1,\ldots,l\}$. Finally, $\norm G {T_i}$ projects surjectively onto $H$ for every $i\in\{1,\ldots,l\}$.

\begin{lemma}\label{lemma:Fr}
Let $G$ be a primitive permutation group with an abelian point-stabiliser. Then $G$ is of affine type.
\end{lemma}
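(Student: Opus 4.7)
The plan is to exploit two classical facts. First, since $G$ acts primitively, the point stabiliser $H := G_1$ is a maximal subgroup of $G$. Combined with the hypothesis that $H$ is abelian, this puts us in the setting of a finite group possessing an abelian maximal subgroup. A classical theorem of It\^o then guarantees that any such group is soluble, so at this first stage one concludes that $G$ is soluble.

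Second, I would show that a soluble primitive permutation group is automatically of affine type. Let $N$ be a minimal normal subgroup of $G$. Since $G$ is soluble, $N$ is an elementary abelian $p$-group for some prime $p$. Since $G$ is primitive, $N$ acts transitively on the underlying set. Finally, since $N$ is abelian, the point stabiliser $N_1$ is a normal subgroup of $N$, which therefore lies in the kernel of the (faithful) action; hence $N_1 = 1$ and $N$ acts regularly. This produces the regular elementary abelian normal subgroup that is the defining feature of the affine type.

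I do not expect either step to present serious difficulty: It\^o's theorem is a standard and well-documented tool, and the passage from ``soluble primitive'' to ``affine'' is essentially an unwinding of the relevant definitions. The chief virtue of the argument is that It\^o's theorem eliminates the non-affine O'Nan--Scott types (almost simple, product action, simple diagonal, and twisted wreath) in a single stroke, sparing us any case-by-case analysis of the other primitive types recalled just before the statement.
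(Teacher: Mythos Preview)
Your argument is correct. The paper itself gives no proof at all but simply refers to \cite[Lemma~2.1]{DSV}, so your self-contained treatment is a genuine addition rather than a paraphrase. Two small remarks. First, the theorem that a finite group with an abelian maximal subgroup is soluble is usually attributed to Herstein (\emph{Proc.\ Amer.\ Math.\ Soc.}\ \textbf{9} (1958), 255--257) rather than to It\^o; It\^o's theorem is the related but distinct statement that a product $G=AB$ of two abelian subgroups is metabelian. Second, in your second step the implication ``$N_1 \trianglelefteq N$ hence $N_1$ lies in the kernel'' deserves one more clause: since $N$ is transitive, every point-stabiliser in $N$ is an $N$-conjugate of $N_1$ and hence equals $N_1$, so $N_1$ fixes every point. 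With those clarifications the argument is complete and, as you say, dispenses with any case-by-case O'Nan--Scott analysis.
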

\begin{proof}
See for example~\cite[Lemma~$2.1$]{DSV}.
\end{proof}

\begin{lemma}\label{lemma:cycliccenter}
Let $G$ be a primitive permutation group of affine type with point-stabiliser $G_1$ and socle $T$. Then $\Z {G_1}$ is cyclic of order coprime to $|T|$.
\end{lemma}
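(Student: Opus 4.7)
The plan is to exploit the fact that, because $G$ is of affine type, $G_1$ acts faithfully and irreducibly on $T$, and then combine Schur's lemma with Wedderburn's little theorem to pin down the centraliser inside $\mathrm{End}_{\mathbb{F}_p}(T)$.

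First I would note that, writing $|T| = p^n$, the socle $T$ has the natural structure of an $n$-dimensional $\mathbb{F}_p$-vector space, and the conjugation action of $G_1$ on $T$ realises $G_1$ as a faithful irreducible subgroup of $\mathrm{GL}(T)$ (both facts are recalled in the paragraph preceding Lemma~\ref{lemma:cycliccenter}). In particular, $T$ is a faithful irreducible $\mathbb{F}_p G_1$-module.

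Next I would apply Schur's lemma to conclude that the endomorphism ring $E := \mathrm{End}_{\mathbb{F}_p G_1}(T)$ is a division ring. Since $E$ embeds in the finite ring $\mathrm{End}_{\mathbb{F}_p}(T)$, it is finite, and Wedderburn's little theorem forces $E$ to be a field; so $E \cong \mathbb{F}_{p^k}$ for some $k \geq 1$, and its multiplicative group $E^\times$ is cyclic of order $p^k - 1$, which is coprime to $p$.

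Finally, every $z \in \Z{G_1}$ acts on $T$ as an $\mathbb{F}_p$-linear map commuting with all of $G_1$, hence lies in $E^\times$. This gives a homomorphism $\Z{G_1} \to E^\times$, which is injective because $G_1$ acts faithfully on $T$. Therefore $\Z{G_1}$ embeds into a cyclic group of order coprime to $p$, and since $|T|$ is a power of $p$, this is exactly the desired conclusion. There is no serious obstacle here: the only subtlety is that Schur's lemma over the non-algebraically-closed field $\mathbb{F}_p$ yields a division ring rather than scalars, which is precisely what Wedderburn's theorem resolves.
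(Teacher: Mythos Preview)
Your proof is correct and takes essentially the same approach as the paper, which simply asserts that the result ``follows immediately from Schur's lemma'' and cites \cite[Theorem~$1$]{LiebeckS} for a complete proof. You have spelled out exactly the argument that reference encodes: Schur's lemma gives a finite division ring, Wedderburn's theorem makes it a finite field, and faithfulness embeds $\Z{G_1}$ into its multiplicative group.
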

\begin{proof}
This follows immediately from Schur's lemma. For a complete proof, see for example~\cite[Theorem~$1$]{LiebeckS}.
\end{proof}

\begin{lemma}\label{lemma3}
A primitive permutation group of almost simple type cannot have a point-stabiliser of exponent dividing $4$.
\end{lemma}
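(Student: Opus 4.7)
The plan is to argue by contradiction. Suppose $G$ is an almost simple primitive group with socle $T$ and point-stabiliser $G_1$ of exponent dividing $4$. Since every element of $G_1$ then has order $1$, $2$, or $4$, the subgroup $G_1$ is a $2$-group; being a point-stabiliser of a primitive (hence faithful) action, $G_1$ is also maximal and core-free in $G$. Note that $G_1\neq 1$, for otherwise $G$ would act regularly, and a regular primitive action forces the group to have prime order, which is incompatible with $G$ having a non-abelian simple socle.

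The next key step is to show that $G_1$ must in fact be a Sylow $2$-subgroup of $G$. If $G_1$ were properly contained in some Sylow $2$-subgroup $P$, then the standard fact that proper subgroups of finite $p$-groups are properly contained in their normalisers would give $G_1<N_P(G_1)\le N_G(G_1)$. Maximality of $G_1$ would then force $N_G(G_1)=G$, so $G_1\trianglelefteq G$, contradicting core-freeness. Next I rule out the case where $G_1$ is abelian: by maximality and core-freeness we have $N_G(G_1)=G_1$, so if $G_1$ were abelian then $G_1\le \Z{N_G(G_1)}$, and Burnside's normal $p$-complement theorem would produce a normal subgroup $N\trianglelefteq G$ of odd order with $|G/N|=|G_1|$. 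Since the socle $T$ is non-abelian simple and of even order (by Feit--Thompson), $T$ is contained in every non-trivial normal subgroup of $G$; this forces $N=1$, hence $G=G_1$, a contradiction.

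I am then reduced to the case where $G_1$ is a non-abelian Sylow $2$-subgroup of $G$ of exponent $4$ that is maximal in $G$. I expect excluding this last configuration to be the main obstacle, and it relies essentially on the classification of finite simple groups. The plan is to observe that the non-abelian simple groups whose Sylow $2$-subgroups have exponent dividing $4$ form a very restricted list (notably including small $\PSL(2,q)$, the Suzuki groups $\mathrm{Sz}(q)$, and a handful of other small examples). For each such $T$, inspection of the known maximal subgroups of almost simple overgroups $T\le G\le\Aut(T)$ shows that a Sylow $2$-subgroup is always properly contained in some larger subgroup (for instance, in a Borel subgroup in the Suzuki case, or in a copy of $S_4$ in the relevant small $\PSL(2,q)$ cases), contradicting the maximality of $G_1$ established above and completing the proof.
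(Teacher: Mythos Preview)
Your reduction to the situation where $G_1$ is a non-abelian Sylow $2$-subgroup of $G$ is correct and matches the paper's opening move; your use of Burnside's normal $p$-complement theorem to dispose of the abelian case is a pleasant elementary addition that the paper does not make explicit. From that point on the two arguments diverge.

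The paper does not attempt to classify simple groups by the exponent of their Sylow $2$-subgroup. Instead it invokes the Li--Zhang classification \cite{LiZ2011} of primitive almost simple groups with soluble point-stabiliser: since $G_1$ is a $2$-group this applies directly and forces $T\cong\PSL(2,q)$ with $T\cap G_1$ a non-abelian dihedral group of order $q\pm 1$. The exponent-$4$ hypothesis then pins $T\cap G_1$ down to $\D_4$, so $q\in\{7,9\}$, and a short inspection of the groups between $\PSL(2,q)$ and $\PGammaL(2,q)$ finishes the proof.

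Your proposed route---classify the non-abelian simple $T$ whose Sylow $2$-subgroup has exponent dividing $4$, then verify for every almost simple $G$ above $T$ that a Sylow $2$-subgroup is never maximal---is sound in principle, but you underestimate its cost. The relevant list is not ``a handful of small examples'': besides the infinite families $\mathrm{Sz}(q)$ and $\PSL(2,q)$ (for infinitely many $q$), it contains $A_7$, $A_8$, $A_9$, $\PSL(3,4)$ and others, and there is no single standard reference packaging exactly this classification by Sylow exponent. One must then also run through all overgroups $T\le G\le\Aut(T)$ whose Sylow $2$-subgroup still has exponent at most $4$ and check non-maximality in each. All of this can indeed be carried out, but the paper's appeal to \cite{LiZ2011} short-circuits it to two values of $q$.
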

\begin{proof}
We argue by contradiction and suppose that $G$ is a primitive permutation group of almost simple type with point-stabiliser $G_1$ a $2$-group of exponent at most $4$.  Since $G_1$ is a non-identity maximal core-free subgroup of $G$, it is self-normalising in $G$ and hence is a Sylow $2$-subgroup of $G$.

Let $T$ be the socle of $G$. Inspecting the lists in~\cite{LiZ2011} yields that $T\cong \PSL(2,q)$ for some  $q$ and that $T\cap G_1$ is a non-abelian dihedral group of order $q+1$ or $q-1$. Since $G_1$ has exponent at most $4$ and $T\cap G_1$ is non-abelian, we have $T\cap G_1\cong \D_4$. In particular $q=7$ or $q=9$. 

If $T\cong\PSL(2,7)$ then either $G\cong\PSL(2,7)$ or $G\cong\PGL(2,7)$. In the latter case, a Sylow $2$-subgroup of $G$ has exponent $8$, while in the former case, a Sylow $2$-subgroup of $G$ is not maximal in $G$. We thus obtain a contradiction in both cases.

If $T\cong\PSL(2,9)$ then $T\leq G\leq\PGammaL(2,9)$. In particular, $G$ is isomorphic to one of the following: $\PSL(2,9)$, $\PGL(2,9)$, $M_{10}$, $\Sym(6)$ or $\PGammaL(2,9)$. It is straightforward to check that in none of these cases is a Sylow $2$-subgroup of $G$ both maximal and of exponent at most $4$. This contradiction concludes the proof.
\end{proof}

\begin{corollary}\label{cor3}
A primitive permutation group with a point-stabiliser of exponent dividing $4$ is of affine type.
\end{corollary}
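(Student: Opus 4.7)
The plan is to apply the O'Nan--Scott classification of primitive permutation groups and rule out every non-affine type by invoking Lemma~\ref{lemma3}. Since any group of exponent dividing $4$ is a $2$-group, $G_1$ is a $2$-group of exponent at most $4$. If $G_1=1$, then $G$ is regular and, by primitivity, cyclic of prime order, which is of affine type, so I henceforth assume $G_1\neq 1$.

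By the O'Nan--Scott theorem, $G$ is of affine, almost simple, product action, diagonal, or twisted wreath type. Affine is the target and almost simple is ruled out directly by Lemma~\ref{lemma3}. In the remaining three families the socle of $G$ has the shape $N=T_1\times\cdots\times T_\ell$ with $\ell\geq 2$ and $T_i\cong T$ a nonabelian simple group, and I would argue case by case that this forces $G_1$ to have a nonabelian simple group as a subgroup or section, contradicting that $G_1$ is a $2$-group. Concretely, in the diagonal cases $G_1$ contains a (block-)diagonal copy of $T$ sitting inside $N$; in the twisted wreath case the stabiliser in $G_1$ of one of the simple factors of $N$ projects, by conjugation on that factor, onto a subgroup of $\Aut(T)$ containing $\mathrm{Inn}(T)\cong T$, again exhibiting $T$ as a section of $G_1$.

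The main obstacle is the product action case, where $G\leq H\wr\Sym(\ell)$ with $H$ almost simple primitive on $\Delta$ of socle $T$, and $N=T_1\times\cdots\times T_\ell$. Choosing $\delta\in\Delta$ and identifying $G_1$ with the stabiliser in $G$ of $(\delta,\ldots,\delta)\in\Delta^\ell$, I would use the transitivity of $N$ on $\Delta^\ell$, together with the fact that each $T_i$ acts only on the $i$-th coordinate, to deduce that $G_1\cap N=T_\delta\times\cdots\times T_\delta$, where $T_\delta$ denotes the stabiliser of $\delta$ in $T$. Projecting onto the first factor embeds $T_\delta$ as a subgroup of $G_1$, so $T_\delta$ has exponent dividing $4$. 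Since $T$ is a nontrivial normal subgroup of the primitive group $H$ on $\Delta$, $T$ is transitive on $\Delta$, and in the product action setting $T$ is in fact primitive on $\Delta$ (any nontrivial $T$-block system would, via the normality of $T$ and the maximality of $H_\delta$ in $H$, produce a proper overgroup of $H_\delta$ in $H$). Consequently, $T$ acts as a primitive permutation group of almost simple type on $\Delta$ with point-stabiliser $T_\delta$ of exponent dividing $4$, directly contradicting Lemma~\ref{lemma3}. This finishes the product action case and completes the proof.
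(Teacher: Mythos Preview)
Your treatment of the affine, almost simple, diagonal and twisted wreath cases is fine, and your overall strategy matches the paper's: reduce to the product action case and then feed the component group back into Lemma~\ref{lemma3}. The gap is in the product action case, where you pass from $H$ down to its socle $T$ and assert that $T$ is itself primitive on $\Delta$. That assertion is false in general, and the justification you give does not work. A $T$-block system need not be $H$-invariant: if $B$ is the $T$-block through $\delta$ and $h\in H_\delta$, then $B^h$ is the block through $\delta$ in the (possibly different) $T$-block system $\mathcal{B}^h$, so one cannot conclude $H_\delta\leq H_B$. Concretely, take $H=\PGL(2,9)$ acting primitively on the $36$ cosets of a dihedral subgroup of order $20$; then $T=\PSL(2,9)\cong A_6$ has point-stabiliser $\D_5$, which is not maximal in $A_6$ (it lies in an $A_5$), so $T$ is imprimitive on $\Delta$. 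Thus knowing only that $T_\delta$ has exponent dividing $4$ does not let you invoke Lemma~\ref{lemma3}.

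The repair is to stay with $H$ rather than descend to $T$. Using the product action structure (in the paper's formulation, $\norm{G}{T_1}$ projects surjectively onto $H$), together with $G=NG_1$, one sees that $\norm{G_1}{T_1}$ projects onto the point-stabiliser $H_\delta$; hence $H_\delta$ is a quotient of a subgroup of $G_1$ and therefore has exponent dividing $4$. Now Lemma~\ref{lemma3} applies directly to the primitive almost simple group $H$, giving the contradiction. This is exactly what the paper does. Incidentally, the paper also shortcuts your case analysis: since a nontrivial $G_1$ of exponent dividing $4$ is a self-normalising $2$-subgroup, it is a Sylow $2$-subgroup of $G$, so $G$ has odd degree, and the Liebeck--Saxl classification of primitive groups of odd degree immediately rules out the diagonal and twisted wreath types without having to locate a copy of $T$ inside $G_1$.
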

\begin{proof}
Assume that $G$ is not of affine type and let $G_1$ be a point-stabiliser of $G$. Again, $G_1$ is a Sylow $2$-subgroup of $G$ and hence $G$ has odd degree. In particular, by~\cite[Theorem]{Liebeck}, $G$ is of almost simple or product action type. By Lemma~\ref{lemma3}, we may assume that $G$ is of product action type. 

Let $N$ be the socle of $G$. Then $N\cong T^\ell$ where $T$ is a non-abelian simple group and $\ell \geq 2$, and $N\unlhd G\leq H\wr \Sym(\ell)$, with $T\unlhd H\leq \Aut(T)$. From the structure of primitive groups of product action type, $H$ is a primitive group of almost simple type with point-stabiliser  a $2$-group isomorphic to a quotient of  a subgroup of $G_1$, and hence of exponent dividing $4$. This contradicts Lemma~\ref{lemma3}.
\end{proof}

\subsection{Counting lemmas}

We now prove a few basic counting lemmas that will be used repeatedly.

\begin{lemma}\label{counting subgroups}
Let $G$ be a group of order $n$. The number of automorphisms of $G$ and the number of subgroups of $G$ are both at most $2^{o(n)}$.
\end{lemma}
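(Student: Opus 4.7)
The plan is to use the classical observation that any group of order $n$ has a generating set of size at most $\log_2 n$, and bound both quantities by counting how an automorphism or a subgroup is determined by such a small generating set.

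First I would show that any subgroup $H$ of $G$ admits a generating set of cardinality at most $\log_2 |H| \leq \log_2 n$. The argument is the standard greedy one: pick $h_1 \in H \setminus \{1\}$, then recursively pick $h_{i+1} \in H \setminus \langle h_1, \ldots, h_i\rangle$ as long as this is possible. Since $\langle h_1, \ldots, h_{i+1}\rangle$ strictly contains $\langle h_1, \ldots, h_i\rangle$, its order is at least doubled at each step by Lagrange, so the process must terminate after at most $\log_2 |H|$ steps with a generating set.

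Next I would bound $|\Aut(G)|$. Any automorphism is determined by its values on a fixed generating set of size at most $k := \lfloor \log_2 n \rfloor$ of $G$, and each value lies in $G$, so
\[
|\Aut(G)| \leq n^k \leq n^{\log_2 n} = 2^{(\log_2 n)^2}.
\]
For the number of subgroups, each subgroup $H$ of $G$ is determined by (any) generating set, so the number of subgroups is bounded above by the number of subsets of $G$ of size at most $k$, giving
\[
\#\{H \leq G\} \leq \sum_{i=0}^{k} \binom{n}{i} \leq (k+1)\binom{n}{k} \leq (k+1) n^k \leq 2^{(\log_2 n)^2 + \log_2 n + 1}.
\]

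Finally I would observe that $(\log_2 n)^2 + O(\log_2 n) = o(n)$, so both quantities are at most $2^{o(n)}$, as required. There is no real obstacle here; the only minor point is remembering that the generating set estimate applies uniformly to every subgroup of $G$, which is what lets the same argument bound the number of subgroups.
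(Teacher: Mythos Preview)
Your proof is correct and follows essentially the same approach as the paper: both use that every (sub)group of order at most $n$ is generated by at most $\log_2 n$ elements, and then bound $|\Aut(G)|$ and the number of subgroups by $n^{\log_2 n}=2^{(\log_2 n)^2}=2^{o(n)}$. You give more detail (the greedy Lagrange argument and the binomial sum) but the underlying idea is identical.
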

\begin{proof}
Clearly, $G$ admits a generating set of size at most $\log_2(n)$ and hence $|\Aut(G)|\leq n^{\log_2(n)}= 2^{o(n)}$. Similarly, any subgroup of $G$ is also at most $\log_2(n)$-generated and thus $G$ has at most $n^{\log_2(n)}= 2^{o(n)}$ subgroups. 
\end{proof}

\begin{lemma}\label{win across blocks}
Let $R$ be a group of order $n$, let $m$ be the number of elements of order at most $2$ in $R$ and let $M$ be a subgroup of $R$. Then there are at most $2^{m/2+n/2-|M|/2+o(n)}$ inverse-closed subsets $S$ of $R$ such that $\Aut(\Cay(R,S))$ contains a subgroup $G$ with the following properties:
\begin{itemize}
\item $G$ contains the right regular representation of $R$,
\item $G$ normalises $M$,
\item $G_1$ centralises $M$, and
\item $G_1$ is not contained in the kernel of the action of $G$ on $M$-orbits.
\end{itemize}
\end{lemma}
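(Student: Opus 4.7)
The plan is to fix a subgroup $G \leq \Sym(R)$ satisfying the four listed properties, count the number of inverse-closed $S$ with $G \leq \Aut(\Cay(R,S))$, and then sum over $G$. Such an $S$ is preserved by $G$ precisely when $S$ is invariant under both the point-stabiliser $G_1$ (as the neighbourhood of $1$) and the set-theoretic inversion $\iota : g \mapsto g^{-1}$; hence the count of such $S$ equals $2^{c(G)}$, where $c(G)$ is the number of orbits of $\langle G_1, \iota\rangle$ on $R$. We therefore aim to prove $c(G) \leq (n+m)/2 - |M|/2$ for each such $G$, and then bound the number of relevant $G$'s by $2^{o(n)}$.

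For the orbit bound, the centraliser hypothesis forces $G_1$ to fix $M$ pointwise, and every $\sigma \in G_1$ to satisfy $\sigma(hm) = \sigma(h)m$ for all $h \in R$ and $m \in M$; in particular $G_1$ permutes the left $M$-cosets with $M$ as a fixed coset, and its action on any other coset is by ``left $M$-translation''. We split $R = M \sqcup (R \setminus M)$, both $\langle G_1, \iota\rangle$-invariant. On $M$, the group acts as $\langle\iota\rangle$ and contributes exactly $(|M|+m_M)/2$ orbits, where $m_M$ is the number of elements of order at most $2$ in $M$; it remains to save $|M|/2$ on $R \setminus M$ compared to the $\iota$-alone count of $(n-|M|+m-m_M)/2$. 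By hypothesis, some $\sigma \in G_1$ moves a coset $C$ to $\sigma(C) \neq C$. In the generic case $\sigma(C) \neq C^{-1}$, the four cosets $C, \sigma(C), C^{-1}, \sigma(C)^{-1}$ are distinct and $\sigma$ bijects $C$ onto $\sigma(C)$, merging $|M|$ distinct $\iota$-pairs and giving a saving of $|M|$ (stronger than needed). In the degenerate case $\sigma(C) = C^{-1}$, the composition $\sigma\iota$ stabilises $C$ setwise, and a direct computation using $\sigma(hm) = \sigma(h)m$ exhibits it as an involution of $C$ whose $\langle \sigma,\iota\rangle$-orbit count on $C \cup C^{-1}$ equals $(|M|+f)/2$ for a fixed-point count $f$ that one bounds; this yields the required saving of $|M|/2$, possibly after replacing $\sigma$ by another element of $G_1 \setminus K$ to avoid the worst configurations.

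To sum over $G$, write $G = G_1 R$ with $G_1$ a subgroup of the centraliser of $M$ in $\Sym(R)_1$; any such $G_1$ is determined by its image $\bar H$ in the point stabiliser $\Sym(R/M)_M$ together with a choice of ``$M$-shift'' on each non-identity coset. When $|M|$ is sufficiently large (say $|M| \geq n/\log n$), $\Sym(R/M)$ has order at most $(\log n)!$, and Lemma~\ref{counting subgroups} bounds the number of $\bar H$'s by $2^{o(n)}$, while the $|M|^{n/|M|}$ available shifts contribute a further $2^{o(n)}$ factor. When $|M|$ is smaller, the target bound $2^{(n+m)/2 - |M|/2 + o(n)}$ already exceeds the total count $2^{(n+m)/2}$ of inverse-closed subsets of $R$, so the lemma holds trivially. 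The main obstacle is the degenerate case of the orbit bound: the ``one merge per element of $C$'' argument fails, and the required saving must be extracted from the involution structure of $\sigma\iota$ on a single coset, where the analysis must absorb contributions from elements of $M$ satisfying a quadratic equation forced by $\sigma$.
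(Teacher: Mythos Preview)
Your approach is substantially different from the paper's and contains a genuine gap.

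\textbf{How the paper argues.} The paper never counts $\langle G_1,\iota\rangle$-orbits and never sums over the possible groups $G$. Instead it fixes, for a given $S$, one element $g\in G_1$ moving an $M$-orbit $qM$ to a different $M$-orbit $rM$, and records the single element $\bar m\in M$ describing the shift. Because $g$ centralises $M$, it acts on $qM$ as left multiplication by $r\bar m q^{-1}$, so $rM\cap S=r\bar m q^{-1}(qM\cap S)$ is determined by $qM\cap S$. Hence $S$ is recovered from the tuple $(qM,rM,\bar m,(R\setminus rM)\cap S)$. The factors $(n/|M|)^2\cdot|M|$ from the first three coordinates are absorbed into $2^{o(n)}$, and one is left with the count of inverse-closed subsets of $R\setminus rM$, namely $2^{m/2+(n-|M|)/2}$. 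There is no orbit bound to prove and no enumeration of possible $G$'s.

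\textbf{Where your argument breaks.} Your orbit inequality $c(G)\le (n+m)/2-|M|/2$ need not hold. In the degenerate case $\sigma(C)=C^{-1}$ you reduce to the fixed-point count $f$ of $\sigma\iota$ on $C$ and obtain a saving of $(|M|-f)/2$. But $f$ can equal $|M|$: take $\sigma$ to agree with inversion on $C\cup C^{-1}$ (so $\sigma(h)=h^{-1}$ for every $h\in C$) and act trivially on all other cosets. Then $\sigma\iota$ is the identity on $C$, the $\langle\sigma,\iota\rangle$-orbits on $C\cup C^{-1}$ coincide with the $\iota$-orbits, and you save nothing. Your escape clause, ``replacing $\sigma$ by another element of $G_1\setminus K$'', is unavailable when every non-kernel element of $G_1$ acts on cosets only by swaps of the form $(C,C^{-1})$; the simplest instance is $G_1=\langle\iota\rangle$ itself (with $R$ abelian and $M$ an elementary abelian $2$-subgroup, so that $\iota$ genuinely centralises $M$ yet moves some $M$-coset). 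In that situation your bound on $c(G)$ fails outright, and with it the counting strategy. The paper's reconstruction argument sidesteps this by never needing the orbits to shrink: the saving of $|M|/2$ in the exponent comes from the fact that one whole coset of $S$ is \emph{determined} by the rest, not from any orbit merge.

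A secondary point: even your ``generic'' case is stated loosely. The set $C\cup C^{-1}\cup\sigma(C)\cup\sigma(C)^{-1}$ need not be $\langle\sigma,\iota\rangle$-invariant; what actually gives the saving of $|M|$ is that the bijection $h\mapsto\sigma(h)$ from $C$ to $\sigma(C)$ induces an injection from $\iota$-orbits meeting $C$ to $\iota$-orbits meeting $\sigma(C)$, so at least $|M|$ distinct $\iota$-orbit pairs are identified. That part is salvageable, but the degenerate case is not.
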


\begin{proof}
Let $\Omega$ be the set of right cosets of $G_1$ in $G$. During this proof, we will be using the action of $G_1$ by conjugation on the elements of $M$; to avoid confusion with the action of $G$ on the vertex-set $R$ of $\Cay(R,S)$  we will consider $G$ as a subgroup of $\Sym(\Omega)$. Since $R$ acts regularly on $\Omega$ there is a natural bijection $\varphi$ from $\Omega$ to $R$, where $\varphi(G_1g)$ is the unique element $r\in R$ such that $G_1g=G_1r$. 

Since $G_1$ is not contained in the kernel of the action of $G$ on $M$-orbits, there exists $g \in G_1$ such that $(G_1qM)g=G_1rM$ for some $q, r \in R$ with $qM \neq rM$. In particular, $G_1qg=G_1r\bar{m}$ for some $\bar{m}\in M$. The number of choices for each of $qM$ and $rM$ is at most $n/|M|$. We now assume that $qM$ and $rM$ are fixed. The number of inverse-closed subsets of $R\setminus rM$ is at most $2^{m/2+(n-|M|)/2}$ and hence this is an upper bound for the number of choices for $(R\setminus rM)\cap S$. 

Since $g$ centralises $M$, we have $$(G_1qx)g=G_1qxg=G_1qgx=G_1r\bar{m}x$$ for every $x \in M$.  Since $g \in G_1$, $g$ must map $qM \cap S$ onto $rM\cap S$. It follows that  $rM\cap S=r\bar{m}q^{-1}(qM\cap S)$ and thus $rM\cap S$ is completely determined by $\bar{m}$ and by $qM \cap S$. The number of choices for $\bar{m}$ is at most $|M|$ and thus the number of choices for $S$ is at most \[(n/|M|)^2\cdot |M|\cdot  2^{m/2+n/2-|M|/2}\leq  2^{m/2+n/2-|M|/2+o(n)}.\qedhere\]
\end{proof}

\section{Cayley graphs on $R$ with $R\ncong \Q_8 \times \C_2^\ell$}\label{sec: Not QxC2}
We first introduce some notation.

\begin{notation}\label{notation2}
Let $A$ be an abelian group of even order and of exponent greater than $2$, let $y$ be an element of order $2$ in $A$ and let $R=\Dic(A,y,x)$. Assume that $R\ncong \Q_8 \times \C_2^\ell$. Let $\iota:R\to R$ be the automorphism of $R$ fixing $A$ pointwise and mapping every element of $R\setminus A$ to its inverse. Let $B=R\rtimes\langle\iota\rangle$, let $C=A\times \langle \iota \rangle$  and let $D=A\rtimes\langle \iota x\rangle$. Let $n=|R|$ and let $m$ be the number of elements of order at most $2$ in $R$.
\end{notation}

Note that $\iota$ fixes every inverse-closed subset of $R$ setwise and hence every Cayley graph on $R$ admits $B$ as a group of automorphisms. The main result of this section is that, in fact, almost all Cayley graphs on $R$ have $B$ as their full automorphism group. Before we state and prove this, we collect a few basic results about $B$, some of which will be used repeatedly. 

\begin{lemma}\label{basic-sec3}
Assume Notation~$\ref{notation2}$. Then 
\begin{enumerate}
\item $C$ is abelian. \label{baba}
\item Every subgroup of $A$ is normal in $B$. \label{bebe}
\item $A$, $C$, $D$ and $R$ are the only proper subgroups of $B$ containing $A$. \label{bibi}
\item $D$ is a  generalised dihedral group on $A$ and $A$ is characteristic in $D$.\label{bobo}
\item If $X\leq R$, $b\in B$ and $X^b\cap X=1$, then $X=1$. \label{bubu}
\item $R$ is characteristic in $B$.\label{RCharB}
\item Let $N$ be a subgroup of index $2$ in $B$ such that $N \notin \{C,D\}$. Then $y\in N$ and the orbit of $y$ under $\Aut(N)$ has size at most $2$.\label{yCharN}
\end{enumerate}
\end{lemma}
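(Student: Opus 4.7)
Parts (1)--(3) follow from the definitions: $C=A\langle\iota\rangle$ is abelian because $\iota$ centralises $A$; any subgroup of $A$ is preserved by $x$ (which inverts $A$) and $\iota$ (trivial on $A$), so is normal in $B=\langle A,x,\iota\rangle$; and since $B/A\cong C_2\times C_2$ has exactly three index-$2$ subgroups, these must pull back to $C$, $D$, and $R$. For (4), the plan is to compute $(\iota x)^2 = x^{-1}\iota\cdot\iota x = 1$ using $x^\iota=x^{-1}$, so $\iota x$ is an involution inverting $A$, giving the generalised dihedral structure on $A$; to see $A$ is characteristic in $D$, I note that $Z(D)=A_2$ (since $A$ has exponent $>2$, no element of $D\setminus A$ centralises $A$) and every element of $D$ of order greater than $2$ lies in $A$, so $A = Z(D) \cdot \langle d\in D : |d|>2\rangle$ is a product of two characteristic subsets.

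For (5), I would split on whether $X\leq A$. If yes, then $X\unlhd B$ by~(2), giving $X^b = X$ and so $X = X\cap X^b = 1$. Otherwise $X$ contains some $r\in R\setminus A$ with $r^2 = y$, so $y\in X$; but $y\in Z(B)$ (since $y\in Z(R) = A_2$ and $\iota$ fixes $y$), so $y = y^b \in X^b$, contradicting $X\cap X^b = 1$. For (6), I would argue that $R$ is the unique index-$2$ subgroup of $B$ that is generalised dicyclic: $C$ is abelian; $D$ is not generalised dicyclic because $A$ is characteristic in $D$ by (4) and $D\setminus A$ consists of involutions rather than order-$4$ elements; and any further index-$2$ subgroup $N$ of $B$ satisfies $A\not\leq N$, and inspecting the abelian index-$2$ subgroups of $N$ shows that none of their complements consists entirely of order-$4$ elements, so $N$ is not generalised dicyclic either. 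Hence every automorphism of $B$ must fix $R$.

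For~(7), first observe that $y\in N$: one checks $B' = A^2\langle y\rangle$ from $[a,x] = a^{-2}$, $[x,\iota] = y$, and the fact that $B/A^2\langle y\rangle$ is abelian, so $y\in B'\leq N$. When $N = R$ the orbit $\{y\}$ has size $1$ by Lemma~\ref{basic-R}(\ref{yeye}). Otherwise $U = N\cap A$ has index $2$ in $A$ and contains $y$; I would then count square roots of involutions in $N$. For any $g\in (R\setminus A)\cap N$, the coset $Ug$ consists of $|U|$ order-$4$ elements all squaring to $y$, whereas any other involution $z$ of $N$ admits at most $2|U_2|$ square roots (from $U$ and from the non-$U$ coset of the centraliser of $U$ in $N$). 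If $U$ has exponent $2$ then $U^2 = \{1\}$, so no other involution is a square and $y$ wins outright; if $U$ has exponent greater than $2$ then $|U_2|\leq|U|/2$ and the same comparison holds, with at most one involution $z$ tying $y$ and only when $U\cong C_4\times C_2^\ell$ with $y\notin U^2$. In either case the set of maximising involutions is characteristic in $N$, of size at most $2$, and contains $y$, so $\Aut(N)$ moves $y$ within this set, yielding the required bound.

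The main obstacle is the careful case analysis in~(7): one must verify the square-root counts uniformly across the up-to-four index-$2$ subgroups of $B$ that arise for each admissible subgroup $U$, and handle the tight edge case $U\cong C_4\times C_2^\ell$ where the orbit of $y$ can genuinely reach size $2$ rather than $1$.
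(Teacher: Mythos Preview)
Your treatment of parts~(1)--(5) is correct and essentially matches the paper. For part~(6), both you and the paper argue that $R$ is the unique generalised dicyclic subgroup of index~$2$ in $B$, but your verification for subgroups $N$ with $A\not\le N$ remains a sketch. The paper's argument is short and worth noting: pick an involution $d\in(D\setminus A)\cap N$; since every involution in a generalised dicyclic group is central, $d$ would be central in $N$; but $C\cap N$ is an abelian subgroup of index~$2$ and $N=\langle C\cap N,d\rangle$, forcing $N$ abelian, a contradiction.

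Part~(7) is where your approach genuinely diverges. The paper's invariant is not the \emph{number} of square roots of $y$ in $N$ but whether those square roots commute. If they do not, then $y$ is the unique non-identity square in $N$ with non-commuting square roots (any other non-identity square in $B$ has all its square roots inside the abelian group $C$), so the $\Aut(N)$-orbit of $y$ is trivial. If they do commute, a two-line computation forces $N\cap A$ to be elementary abelian, hence $A\cong\C_4\times\C_2^i$; then $C\cong\C_4\times\C_2^{i+1}$ has a unique non-identity square, so $B$ (and hence $N$) has at most two non-identity squares altogether. No coset bookkeeping is needed.

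Your counting approach can be pushed through, but there is a genuine gap as written. When $U=N\cap A$ has exponent~$2$ you conclude from $U^2=\{1\}$ that ``no other involution is a square and $y$ wins outright''. This overlooks the coset $N\cap\iota A$. Writing $N\cap\iota A=b\iota U$, every element there squares to $b^2$; if $b\notin U$ then $b^2$ is a non-identity involution, possibly distinct from $y$, with exactly $|U|$ square roots in $N$, tying with $y$. (Your description of square roots of $z\neq y$ as lying in ``$U$ and the non-$U$ coset of the centraliser of $U$ in $N$'' also breaks down here, since when $U$ has exponent~$2$ every element of $N$ centralises $U$.) The desired bound still holds in this sub-case, because $y$ and $b^2$ are then the only non-identity squares in $N$, but your argument does not establish it. Once you account for the $\iota A$-coset uniformly, as you already do in the $|U:U_2|=2$ analysis, the counting route does succeed.
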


\begin{proof}
The proofs of (\ref{baba}--\ref{bobo}) follow immediately from the definitions.

Proof of~(\ref{bubu}): by~(\ref{bebe}), $X\cap A$ is normal in $B$ and thus $X\cap A=(X\cap A)^b\leq X^b\cap X=1$. Since $|R:A|=2$, it follows that $|X|\leq 2$. As every element of $R\setminus A$ has order $4$, we have $X\leq A$ and thus $X\cap A=X=1$.

Proof of~(\ref{RCharB}): by contradiction, suppose that $R'$ is a distinct conjugate subgroup of $R$ in $B$. Since neither $C$ nor $D$ is generalised dicyclic, it follows that $R'\notin \{C,D\}$. By~(\ref{bibi}), this implies that $A\nleq R'$. Since $|B:R'|=2$, it follows that $|X:X\cap R'|=2$ for every $X\in\{A,C,D,R\}$.

Let $d\in (D\setminus A)\cap R'$. Note that $d$ is an involution. Since every involution in $R$ is central, the same holds in $R'$ and thus $d$ is central in $R'$. Note that $C\cap R'$ is an abelian subgroup of index $2$ in $R'$ and that $C\cap R'$ and $d$ generate $R'$. It follows that $R'$ is abelian, which is a contradiction.

Proof of~(\ref{yCharN}): let $N$ be a subgroup of index $2$ in $B$ such that $N \notin \{C,D\}$. By~(\ref{bibi}), it follows that $N\cap (R\setminus A)\neq \emptyset$ and $N\cap A$ has index at most $2$ in $A$. 

Note that the elements of $R\setminus A$ square to $y$ and, in particular, $y\in N$.  Note also that all the elements of $D\setminus A$ square to the identity. It follows that any square in $B$ distinct from $1$ and $y$ has all of its square roots in $C$, and, since $C$ is abelian, all of these square roots commute with each other. 

In particular, if the square roots in $N$ of $y$ do not commute, then $y$ is the unique non-identity square in $N$ whose square roots do not commute, and hence the orbit of $y$ under $\Aut(N)$ has size $1$. We may thus assume that the square roots in $N$ of $y$ commute. 

Fix $ax \in N \cap (R\setminus A)$ and let $b \in N \cap A$. Note that $ax$ and $bax$ are both square roots in $N$ of $y$ and hence $(ax)(bax)=(bax)(ax)$.  With a computation, this yields $b^2=1$ and hence $N\cap A$ is an elementary abelian $2$-group. 

Since $N\cap A$ has index at most $2$ in $A$ and $A$ is not an elementary abelian $2$-group, it follows that $A\cong \C_4\times \C_2^i$ for some $i$ and thus $C\cong \C_4\times \C_2^{i+1}$. This implies that $C$ contains a unique non-identity square $z$. Thus $y$ and $z$ are the only (not necessarily distinct) non-identity squares of $B$, and hence the orbit of $y$ under $\Aut(N)$ has size at most $2$. This completes the proof.
\end{proof}

The following lemma will also prove useful.

\begin{lemma}\label{2blocks}
Assume Notation~$\ref{notation2}$. Then there are at most $2^{m/2+23n/48+o(n)}$ inverse-closed subsets $S$ of $R$ such that  $\Aut(\Cay(R,S))$ contains a subgroup $H$ with the following properties:
\begin{itemize}
\item $A\leq H$,
\item the $A$-orbits are $H$-invariant, and 
\item $|H:A|$ does not divide $4$.
\end{itemize}
\end{lemma}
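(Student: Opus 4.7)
The plan is to analyse the stabiliser $H_1$ of the vertex $1$ in $H$, constrain its structure using the index hypothesis, and bound the number of inverse-closed $H_1$-invariant subsets of $R$ by orbit-counting.

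First I would reduce. Since $\iota$ preserves every inverse-closed subset of $R$ and preserves the $A$-orbits, I may replace $H$ by $\langle H, \iota\rangle$ without loss and assume $\iota \in H_1$. Hence $|H_1|$ is even. Combining this with $|H:A| \notin \{1,2,4\}$ (together with a short case analysis according to whether $H$ swaps the two $A$-orbits or fixes each setwise) yields $|H_1| \geq 4$.

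Next, let $L = C_{H_1}(A)$ be the centraliser of $A$ in $H_1$; a direct check using $rar^{-1} = a^{-1}$ for $r \in Ax$ shows that $\iota$ commutes with $A$, so $\iota \in L$ and $|L| \geq 2$. For any $l \in L$, commutation with the regular right $A$-action, applied to $1$, forces $l$ to fix $A$ pointwise; restricted to $Ax$, $l$ is then a right-$A$-equivariant bijection of the regular $A$-set $Ax$, which must equal left multiplication by some $c(l) \in A$. The map $l \mapsto c(l)$ is therefore an injective homomorphism $L \hookrightarrow A$ sending $\iota \mapsto y$. The $L$-orbits on $R$ are accordingly $|A|$ singletons on $A$ together with $|A|/|L|$ left cosets of $L$ (its image) on $Ax$, each of size $|L|$ and automatically inverse-closed because $y \in L$ and inversion on $Ax$ is left multiplication by $y$. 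Counting inverse-closed $L$-invariant subsets of $R$ yields at most $2^{(m+|A|)/2 + |A|/|L|} = 2^{m/2 + n/4 + n/(2|L|)}$; if $|L| \geq 4$, this is at most $2^{m/2 + 3n/8} = 2^{m/2 + 18n/48}$, which is sharper than what is needed, and summing over the at most $2^{o(n)}$ subgroups $L \leq A$ (Lemma~\ref{counting subgroups}) finishes this case.

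The main obstacle is the remaining case $|L| = 2$, in which $L = \langle \iota \rangle$. Here the $L$-orbit bound alone is only the trivial $2^{m/2 + n/2}$, so the required saving of $n/48$ must come from the induced action of $H_1/L$. Since each $h \in H_1$ restricts to a permutation of the block $A$ that intertwines the regular right $A$-action on $A$, it acts on $A$ as a group automorphism; hence the kernel $L_1$ of the action of $H_1$ on $A$ contains $L$, and $H_1/L_1$ embeds in $\Aut(A)$. Because $|H_1| \geq 4$, either $|H_1/L_1| \geq 2$ (nontrivial automorphic action on $A$) or $L_1 > L$ (extra structure on $Ax$). I would then combine orbit-counting for the action on $A$ (where automorphic orbits pair up under inversion) with orbit-counting for $H_1$ on $Ax$ (coarsening the $\iota$-pairs $\{r, r^{-1}\}$), and sum over the at most $2^{o(n)}$ relevant subgroups of $\Aut(A)$ and of $A$ via Lemma~\ref{counting subgroups}. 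The bookkeeping here is delicate: balancing the inverse-pairings on $A$, the $\iota$-pairings on $Ax$, and the $H_1/L_1$-action, so as to extract exactly the extra $n/48$ saving beyond the trivial bound, is what produces the specific exponent $23n/48$.
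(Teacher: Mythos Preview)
Your reduction (adding $\iota$, analysing the centraliser $L=C_{H_1}(A)$, and the homomorphism $l\mapsto c(l)$ into $A$) is correct and the case $|L|\ge 4$ goes through as you describe. The problem is the case $L=\langle\iota\rangle$: here your argument is only a sketch, and it contains an unjustified step. You write that each $h\in H_1$ ``intertwines the regular right $A$-action on $A$'' and conclude $H_1/L_1\hookrightarrow\Aut(A)$. But nothing in the hypotheses forces $A$ to be normal in $H$; the assumption is only that the $A$-orbits are $H$-invariant, which says $A$ lies in the kernel of the block action, not that it equals that kernel. So $H_1$ need not normalise the regular $A$-action, and $H_1/L_1$ embeds a priori only in $\Sym(A)_1$, not in $\Aut(A)$. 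Even granting that, you have not shown how to extract a saving of exactly $n/48$ from a single nontrivial permutation of $A$ preserving $S\cap A$ while simultaneously controlling $(R\setminus A)\cap S$; an arbitrary involution of $A$ can fix almost all of $A$, so orbit-counting alone does not give this.

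The paper handles this case by a genuinely different idea that you are missing. It splits according to whether $\Aut(\Cay(A,S\cap A))$ is strictly larger than $A\rtimes\langle\iota'\rangle$. If it is, the main theorem of~\cite{DSV} on Cayley graphs of abelian groups bounds the number of such $S\cap A$ by $2^{m/2+11n/48+o(n)}$, and the trivial bound $2^{n/4}$ on $(R\setminus A)\cap S$ gives $2^{m/2+23n/48+o(n)}$; this is precisely where the exponent $23/48=11/48+1/4$ comes from. If it is not, then the induced action $\Lambda$ of $H$ on the block $A$ is contained in $A\rtimes\langle\iota'\rangle$, and the paper uses the wreath-product embedding $H\le\Lambda\wr\C_2$ together with Lemma~\ref{BasicDicyclic}(\ref{yiyi}) to locate an element $z$ moving at least $n/6$ elements of $R\setminus A$ off their inverse-pair, yielding the sharper bound $2^{m/2+11n/24+o(n)}$. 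Without the input from~\cite{DSV} (or an equivalent result on abelian Cayley graphs), your orbit-counting strategy does not produce the required exponent.
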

\begin{proof}
Let $\iota'$ be the automorphism of $A$ mapping every element to its inverse and let $A'=A\rtimes \langle\iota'\rangle$. 

Suppose first that $\Aut(\Cay(A,A\cap S))> A'$. By~\cite[Proof of Theorem~1.5]{DSV}, there are at most $2^{m/2+11n/48+2(\log_2(n))^2+2}=2^{m/2+11n/48+o(n)}$ possible choices for $A\cap S$ with this property. Since $S$ is inverse-closed and no element of $R\setminus A$ is an involution, there are at most $2^{n/4}$ choices for $(R\setminus A)\cap S$.  Thus altogether there are at most $2^{m/2+23n/48+o(n)}$ possible choices for $S$ in this case.

We now consider the case when $\Aut(\Cay(A,A\cap S))=A'$. Let $H_A$ be the stabiliser in $H$ of the $A$-orbit $A$ and let $\Lambda$ be the group induced by the action of $H_A$ on $A$. Since $A\leq H_A$ and since $H$ acts as a group of automorphisms of $\Cay(R,S)$, we  have $A\leq \Lambda\leq \Aut(\Cay(A,A\cap S))=A'$. 

From the Embedding Theorem~\cite[Theorem~$1.2.6$]{meldrum}, $H \le \Lambda \wr \C_2=(\Lambda\times \Lambda)\rtimes\C_2$. (The first coordinate corresponds to the action on $A$ while the second coordinate corresponds to the action on $xA$.) Moreover, under this embedding, the group $A$ is identified with the diagonal subgroup $\{(z,z)\mid z\in A\}$ and  $H_A\leq \Lambda\times \Lambda$.  Let $K=\{(z_1,z_2)\in H_A\mid z_1=1\}$ and let $L=\{(z_1,z_2)\in H_A\mid z_1\in\{1,\iota'\}\}$.

We claim that there exists $z=(z_1,z_2)\in L$ with $z_2\notin \{1,y\}$. Assume, on the contrary, that $z_2\in \{1,y\}$ for every $(z_1,z_2)\in L$. In particular, $|K|\leq 2$. On the other hand, we have
$$|H:A|=|H:H_A||H_A:A|=|H:H_A||\Lambda:A||K|.$$ 
Since $A$ has two orbits which are $H$-invariant, we have $|H:H_A|\leq 2$. As  $|H:A|$ does not divide $4$ and $|\Lambda:A|\leq 2$, it follows that $|H:H_A|=|\Lambda:A|=|K|=2$. We may thus assume that $K=\langle(1,y)\rangle$, that $H$ is transitive, and that $\Lambda=A'$. 

Since $\Lambda=A'$, it follows that $|L|=2|K|=4$. As we are assuming that every element in $L$ has second coordinate in $\{1,y\}$, we get $L=\langle(1,y),(\iota',1)\rangle$. Since $H$ is transitive, there exists $h\in H$ interchanging the two $A$-orbits. As the first coordinate of $(\iota',1)^h$ is the identity, we get $(\iota',1)^h\in K$ and hence $(\iota',1)^h=(1,y)$.  Observe that $\iota'$ fixes some but not all of the points of $A$, thus $(\iota',1)^h=(1,y)$ fixes some but not all of the points of $xA$. This is a contradiction since $y$ acts fixed point-freely on $xA$. This completes the proof of our claim.

\smallskip

There are at most $2|A'|\leq 2^{o(n)}$ choices for $z$. We now assume that $z=(z_1,z_2)$ is fixed and count the number of elements $ax \in R\setminus A$ such that $(ax)^z\in \{ax,(ax)^{-1}\}=\{ax,axy\}$. First, suppose that $z_2\in A$. In this case, we have $(ax)^z=axz_2$ and $axz_2\in \{ax, axy\}$ if and only if $z_2\in \{1,y\}$, which is a contradiction. Next, suppose that $z_2=b\iota'$ for some $b\in A$. We have $(ax)^z=(axb)^{\iota'}=(ab^{-1}x)^{\iota'}=a^{-1}bx$ and $a^{-1}bx\in\{ax, axy\}$ if and only if $a^2\in\{b,by\}$. By Lemma~\ref{BasicDicyclic}(\ref{yiyi}), the number of such $a$ is at most $2|A|/3$. In particular, there are at least $|A|/3=n/6$ elements $a\in A$ such that $(ax)^z\notin \{ax,(ax)^{-1}\}$.

Since $z_1\in\{1,\iota'\}$, it follows that $z$ fixes the vertex of $\Cay(R,S)$ corresponding to the identity and thus $S$ is $\langle z\rangle$-invariant. As $z\in H_A$, in fact $(R\setminus A)\cap S$ is $\langle z\rangle$-invariant and hence clearly $\langle z,\iota\rangle$-invariant as well. Since $R\setminus A$ does not contain any involutions, any element of $R\setminus A$ is in an orbit of length at least $2$ under $\langle z,\iota\rangle$. Furthermore, observe that if $(ax)^z\notin\{ax,(ax)^{-1}\}$, then $ax$ is in an orbit of length at least $4$ under $\langle z,\iota\rangle$.  It follows that the number of choices for $(R\setminus A)\cap S$  is at most $2^{\frac{n/6}{4}+\frac{n/2-n/6}{2}}=2^{5n/24}$.  As the number of choices for $A\cap S$ is at most $2^{m/2+n/4}$, there are at most $2^{m/2+11n/24+o(n)}$ choices for $S$ in this case.

Adding the results we obtained in the two cases, we find that the number of choices for $S$ is at most  $2^{m/2+23n/48+o(n)}$.
\end{proof}

Finally, we will need the following result. As the proof is long, technical, and different in flavour from the rest of the paper, we will present it separately in Section~\ref{sec:Proof of Theo}.

\begin{theorem}\label{theoPrimitive}
Assume Notation~$\ref{notation2}$. Let $X=B/N$ be a quotient of $B$ and let $G$ be a primitive permutation group with point-stabiliser $X$. Then $G$ has a unique minimal normal subgroup. Moreover, either $G$ is of affine type or $y\in N$.
\end{theorem}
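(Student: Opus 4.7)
The plan is to apply the O'Nan--Scott classification of finite primitive permutation groups, exploiting that the point-stabiliser $X = B/N$ is solvable. Solvability of $X$ is immediate: $B$ contains the abelian subgroup $A$ as a normal subgroup of index $4$, so $B$ is solvable, and hence so is every quotient $X = B/N$.

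For the first assertion, I would invoke that the only primitive types in the O'Nan--Scott classification admitting more than one minimal normal subgroup are the ``holomorph of simple'' (HS) and ``holomorph of compound'' (HC) types. In both of these, the point-stabiliser is isomorphic to a direct power of a nonabelian simple group, and is therefore non-solvable. Since $X$ is solvable, both types are excluded, and $G$ has a unique minimal normal subgroup.

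For the second assertion, I would first reduce to the almost simple case. Assume $G$ is not of affine type, so by the first step the unique minimal normal subgroup $M = T^k$ is nonabelian with $T$ a nonabelian simple group, and $G$ is of type AS, SD, CD, PA, or TW. The diagonal and wreath-like types all require the point-stabiliser to contain, in a structured way, a non-solvable subgroup (a diagonal copy of $T$ for SD/CD; a non-trivial section involving $T$ for PA/TW). Combining this with the solvability of $X$, together with the additional structural fact that $AN/N$ is an abelian normal subgroup of $X$ of index dividing $4$ (hence $X$ is metabelian), rules out each of these non-AS types, leaving $G$ almost simple with $T \unlhd G \le \Aut(T)$.

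Now suppose, for a contradiction, that $y \notin N$. Since $y \in Z(B)$, the image $\bar y \in Z(X)$ is a nontrivial central involution; since $G$ is almost simple, $Z(G) = 1$, so $C_G(\bar y)$ is a proper overgroup of the maximal subgroup $X = G_\alpha$, forcing $X = C_G(\bar y)$. Moreover, $C = A \times \langle\iota\rangle$ is abelian and normal of index $2$ in $B$, so $\bar C = CN/N$ is an abelian normal subgroup of $X$ of index at most $2$; by Lemma~\ref{lemma:Fr}, the equality $\bar C = X$ would force $G$ to be of affine type, so $|X:\bar C| = 2$, and $X$ is generated by an abelian group together with the image $\bar x$ of $x$, satisfying $\bar x^2 = \bar y$. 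The hard part will be to rule out all remaining almost simple groups $G$ admitting such a tightly structured involution centraliser $X$: this will require a CFSG-based analysis in the style of (but more involved than) Lemma~\ref{lemma3}, appealing for example to the classification of solvable maximal subgroups of almost simple groups or to the classification of finite simple groups with a solvable centraliser of an involution, and using the fine structure of $B$ (in particular the precise action of $\bar x$ on $\bar C$ and the fact that $\bar x^2$ is central) to exclude each surviving candidate.
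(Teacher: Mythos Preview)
Your overall strategy is sound and close in spirit to the paper's, but there is a genuine gap in your reduction to the almost simple case.

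The problematic step is your claim that product action (PA) type is excluded because the point-stabiliser would contain ``a non-trivial section involving $T$''. This is false: in PA type, with $G \leq H \wr \Sym(\ell)$ and $H$ almost simple with socle $T$, the point-stabiliser $G_\alpha$ embeds in $H_\delta \wr \Sym(\ell)$ where $H_\delta$ is the point-stabiliser of $H$ in its primitive action on $\Delta$. If $H_\delta$ happens to be solvable (e.g.\ $H = \PSL(2,q)$ with $H_\delta$ dihedral), then $G_\alpha$ can be solvable, even metabelian, without any composition factor isomorphic to $T$. Indeed, the classification you implicitly rely on (primitive groups with solvable stabilisers, \cite{LiZ2011}) explicitly lists product action as one of the three surviving types alongside affine and almost simple. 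So solvability plus metabelianness of $X$ does not, by itself, rule out PA, and your argument stalls here.

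The paper deals with PA by a structural argument specific to $B$: it first reduces (by a short case analysis on where $N$ sits inside $B$) to the situation where the point-stabiliser is either generalised dicyclic or isomorphic to $B$ itself, and then in the PA case uses Lemma~\ref{basic-sec3}(\ref{bubu}) to show that the components $B_i = B \cap T_i$ must satisfy $B_i \cap R = 1$, forcing $|B_i| = 2$ and hence that $T$ has a Sylow $2$-subgroup of order $2$, which is impossible for a nonabelian simple group. Your proposal would need an analogous argument to close this gap; the abstract facts you list about $X$ (metabelian, abelian-by-$\C_2$, central involution $\bar y$) are not enough without exploiting something like the conjugacy behaviour of subgroups of $R$ inside $B$.

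Your treatment of the AS case via involution centralisers is a reasonable plan and could in principle be made to work, though it is likely at least as laborious as the paper's route, which exploits that $x$ fixes a unique point (hence odd degree) and then combines the Liebeck--Saxl odd-degree classification with Brauer--Suzuki and Walter's theorem on the Sylow $2$-structure of $B \cap N$.
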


We are now ready to prove the main theorem of this section.

\begin{theorem}\label{Theo:main not Q}
Assume Notation~$\ref{notation2}$.  The number of inverse-closed subsets $S$ of $R$ such that $\Aut(\Cay(R,S))>B$ is at most $2^{m/2+23n/48+o(n)}$. In particular, the proportion of inverse-closed subsets $S$ of $R$ such that $\Aut(\Cay(R,S))=B$ goes to $1$ as $n\to\infty$.
\end{theorem}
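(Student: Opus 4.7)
The plan is to show that any inverse-closed $S\subseteq R$ with $G:=\Aut(\Cay(R,S))\supsetneq B$ can be accounted for by one of a short list of structural configurations, each of which the counting tools in Section~\ref{sec:Preliminaries} bound by at most $2^{m/2+23n/48+o(n)}$ sets. The target bound matches Lemma~\ref{2blocks} on the nose, so the dominant case will be the one handled directly by that lemma, and every other configuration should give a strictly better bound.

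First I will study whether $G$ preserves the canonical partition $\{A,xA\}$ of $R$. If it does, let $H$ be the setwise stabiliser of $A$ in $G$; then $A\leq H$ and the $A$-orbits are $H$-invariant, so when $|H:A|\nmid 4$, Lemma~\ref{2blocks} yields precisely the target bound. The residual case $|H:A|\in\{2,4\}$ forces $|G_1|\leq 4$ and $\iota\in G_1$, and since $\iota$ centralises $A$ pointwise one applies Lemma~\ref{win across blocks} to $M=A$, obtaining roughly $2^{m/2+n/4+o(n)}$ subsets, comfortably below target. In the other case, where $\{A,xA\}$ is not $G$-invariant, I will take the coarsest proper $G$-invariant partition $\mathcal{B}$ of $R$; the block $M$ through $1$ is then a proper subgroup of $R$ strictly smaller than $A$ (or trivial), the quotient $\bar G=G/K$ acts primitively on $\mathcal{B}$, and one argues---using maximality of $\mathcal{B}$ together with Lemma~\ref{basic-sec3}(\ref{bibi}) and Lemma~\ref{basic-sec3}(\ref{RCharB})---that $\bar G$ is generated by the image of $B$, so that its point-stabiliser $\bar G_M$ is isomorphic to a quotient $B/N$ of $B$. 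Now Theorem~\ref{theoPrimitive} applies: either $\bar G$ is of affine type, or $y\in N$. In the affine sub-case, Lemma~\ref{lemma:cycliccenter} and Corollary~\ref{cor3} force the socle to be a large elementary abelian $p$-group, which lifts to a subgroup $M'\leq R$ that is normal in $G$, centralised by $G_1$, and on which $G_1$ acts non-trivially across orbits; Lemma~\ref{win across blocks} then yields at most $2^{m/2+n/2-|M'|/2+o(n)}$ sets. In the sub-case $y\in N$, the centrality of $y$ in $R$ together with Lemma~\ref{basic-sec3}(\ref{yCharN}) forces $A$ to be $G$-invariant after all, contradicting the standing hypothesis of this branch and closing it.

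The main obstacle I anticipate is the identification $\bar G=\bar B$ and the consequent identification of $\bar G_M$ as a quotient of $B$, so that Theorem~\ref{theoPrimitive} may be invoked: this requires showing that every element of $G$ lies in $BK$, which in turn rests on a delicate interplay between the maximality of $\mathcal{B}$ and the list of subgroups of $B$ above $A$ provided by Lemma~\ref{basic-sec3}(\ref{bibi}). A secondary obstacle is producing a pulled-back subgroup $M'$ large enough in the affine sub-case (say $|M'|\geq n/24$) for Lemma~\ref{win across blocks} to beat the target bound; here the hypothesis $R\not\cong\Q_8\times\C_2^\ell$ is essential via Lemma~\ref{BasicDicyclic}, which precisely excludes the degenerate configurations that must be handled separately in Section~\ref{sec:Q8 times E}.
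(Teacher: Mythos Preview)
Your plan has two concrete gaps, and both stem from the decision to work with $G=\Aut(\Cay(R,S))$ itself rather than with an overgroup of $B$ in which $B$ is maximal.

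First, in your ``no'' branch you need the point-stabiliser of the primitive quotient $\bar G$ to be a quotient of $B$ so that Theorem~\ref{theoPrimitive} applies. But the block-stabiliser $G_M$ contains $G_1$, which for the full automorphism group can be arbitrarily large relative to $B$; there is no mechanism forcing $G_M\leq BK$. You correctly flag this as the main obstacle, and Lemma~\ref{basic-sec3}(\ref{bibi}) does not help: it only lists the subgroups of $B$ containing $A$, which says nothing about $G_M$. The paper sidesteps this entirely by \emph{choosing} $G$ to be a subgroup of $\Aut(\Cay(R,S))$ in which $B$ is maximal; then $G$ modulo the core $N$ of $B$ is primitive with point-stabiliser exactly $B/N$, and Theorem~\ref{theoPrimitive} applies on the nose. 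The subsequent case analysis is also different from yours: rather than lifting the affine socle into $R$, the paper shows that (in the relevant sub-cases) $CT$ has the same two orbits as $C$ and feeds $H=CT$ back into Lemma~\ref{2blocks}.

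Second, your residual case in the ``yes'' branch ($|H:A|\mid 4$, which forces $|H:A|=4$ and $|G_1|=4$) cannot be closed by Lemma~\ref{win across blocks} with $M=A$. Since $G_1$ fixes the vertex $1\in A$, it fixes the block $A$ setwise, and with only two blocks this puts $G_1$ inside the kernel of the action on $A$-orbits---violating the final hypothesis of that lemma. (Nor have you checked that $G_1$ centralises $A$.) The paper's dichotomy is not ``does $G$ preserve $\{A,xA\}$'' but ``is $y$ central in $G$''. When $y$ is central it sets $U=\langle s\in S: sy\notin S\rangle\leq A$; if $U=A$ then Lemma~\ref{2blocks} applies, while if $U<A$ it exhibits an explicit extra automorphism of $\Cay(R,S)$ swapping $a\leftrightarrow ay$ on $A\setminus U$, and counts directly using Lemma~\ref{BasicDicyclic}(\ref{yaya}). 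This hand-built automorphism is the missing ingredient in your outline.
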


\begin{proof}
Note that the number of inverse-closed subsets of $R$ is $2^{m/2+n/2}$, hence the second part of the theorem follows from the first. Let $S$ be an inverse-closed subset of $R$ such that $\Aut(\Cay(R,S))>B$ and let $G$ be a subgroup of $\Aut(\Cay(R,S))$ containing $B$ as a maximal subgroup. 

\smallskip
\noindent\textbf{Case 1.} $y$ is central in $G$.
\smallskip

Let $T=\{s \in S\mid sy \not\in S\}$ and let $U =\langle T\rangle$.  Since $S$ is $G_1$-invariant and $y$ is central in $G$, it follows that $T$ is $G_1$-invariant. Hence the $U$-orbits are $G_1$-invariant.
Note that $\iota\in G_1$ and, for every $z\in R\setminus A$, we have $z^\iota=zy$. Since $S$ is $G_1$-invariant, this shows that $T \cap (R\setminus A)=\emptyset$, and hence $T \subseteq A$ and $U \le A$.  By Lemma~\ref{basic-sec3}(\ref{bebe}), $R$ normalises every subgroup of $A$. In particular $R$ normalises $U$ and thus the $U$-orbits are invariant under $G=RG_1$.

Suppose first that $U=A$. Since $|B:A|=4$ and $G>B$ we have $|G:A|>4$.  Therefore Lemma~\ref{2blocks} (applied with $H=G$) implies that there are at most $2^{m/2+23n/48+o(n)}$ choices for $S$ in this case.

We now assume that $U<A$. By Lemma~\ref{counting subgroups}, there are at most $2^{o(n)}$ choices for $U$. Assume that $U$ is fixed and let $g$ be the permutation of $R$ that fixes every element of $U$ and every element of $R\setminus A$ but interchanges $a$ with $ay$ for every $a \in A\setminus U$. A few calculations reveal that $g$ is an automorphism of $\Cay(R,S)$ fixing the identity. Let $X=\{r \in R\mid \{r,r^{-1}\}^g\neq\{r,r^{-1}\} \}$ and note that, if $r\in A\setminus U$ and $r^2\neq y$ then $r\in X$. Since $U<A$, it follows from Lemma~\ref{BasicDicyclic}(\ref{yaya}) that $|X|\ge |A|/4$.  
Let $m_1$ be the number of involutions in $X$. Note that $X$ is itself inverse-closed.  The number of inverse-closed subsets of $R\setminus X$ is exactly $$2^{(m-m_1)+((n-|X|)-(m-m_1))/2}=2^{(m-m_1)/2+(n-|X|)/2}$$ and hence this is an upper bound for the number of choices for $(R\setminus X)\cap S$.  

Involutions of $R$ lying in $X$ are in $\langle g \rangle$-orbits of length at least $2$; while if $x$ is a non-involution in $X$, then $\{x,x^{-1}\}^g\neq \{x,x^{-1}\}$ and hence the smallest $\langle g \rangle$-invariant inverse-closed subset of $X$ containing $x$ has size at least $4$. Since $S$ is inverse-closed and $\langle g \rangle$-invariant, the number of choices for $X\cap S$ is at most $2^{m_1/2+(|X|-m_1)/4}$. Hence the total number of choices for $S$ is at most $2^{m/2+n/2-|X|/4-m_1/4} \le 2^{m/2+n/2-|X|/4}$. Since $|X|\ge |A|/4$, it follows that there are at most $2^{m/2+15n/32+o(n)}$ choices for  $S$ in this case.

\smallskip
\noindent\textbf{Case 2.} $y$ is not central in $G$.
\smallskip

Let $N$ be the core of $B$ in $G$, that is $N=\bigcap_{g \in G} B^g$. We will use the ``bar convention'' and, for all $X \le G$,  denote the group $XN/N$ by $\overline{X}$. It follows from Lemma~\ref{basic-R}(\ref{yeye}) and Lemma~\ref{basic-sec3}(\ref{RCharB}) that $\langle y\rangle$ is characteristic in $B$. Since $y$ is not central in $G$, $B$ is not normal in $G$ and thus $N<B$. As $B$ is maximal in $G$, we have that $\overline{G}$ is a primitive permutation group on the cosets of $\overline{B}$, with point-stabiliser $\overline{B}$. 

Suppose that $N\not\le C$. Fix $cx\in N\cap (B\setminus C)$ where $c \in C$. For every $a\in A$ we have $(cx)^a=cx^a=cxa^2 \in N$ thus $a^2\in N$ and hence $A^2 \le N$. Moreover $(cx)(cx)^x=cxx^{-1}cxx=c^2y \in N$.  Since $C^2=A^2 \le N$, this shows that $y \in N$ and hence $\langle A^2, y \rangle \le N$. This implies that $\overline{B}$ is an elementary abelian $2$-group.  By Lemmas~\ref{lemma:Fr} and~\ref{lemma:cycliccenter}, it follows that $\overline{B}$ is cyclic of order $2$. Since the point-stabiliser of a primitive group is self-normalising, $\overline{B}$ is a Sylow $2$-subgroup of $\overline{G}$ and $|G:B|$ is odd. 

Suppose that $N\neq D$. Since $|B:N|=2$, it follows by Lemma~\ref{basic-sec3}(\ref{yCharN}) that $y\in N$ and the orbit of $y$ under $\Aut(N)$ has size at most $2$. Since $|G:B|$ is odd and $y$ is central in $B$, this implies that $y$ is central in $G$, which is a contradiction.

We may thus assume that $N=D$. Since $A$ is characteristic in $D$, it follows that $A$ is normal in $G$. By Lemma~\ref{2blocks} (applied with $H=G$), there are at most $2^{m/2+23n/48+o(n)}$ choices for  $S$ in this case.

From now on, we assume that $N\leq C$. By Theorem~\ref{theoPrimitive}, $\overline{G}$ has a unique minimal normal subgroup $T/N$. 

Suppose that $N<C$ and that $y\in N$. Since $C$ is abelian, we have $N<C\leq\cent G N\unlhd G$ and thus $1<\overline{\cent G N}\unlhd \overline{G}$. As $T/N$ is the unique minimal normal subgroup of $\overline{G}$, it follows that $T\leq \cent G N$. In particular, $y$ is centralised by $T$. As $y$ is central in $B$, $y$ is central in $BT=G$, which is a contradiction. 

We may thus assume that either $N=C$ or $y\notin N$. If $N=C$ then $|\overline{B}|=2$ and thus $\overline{G}$ is dihedral and $\overline{T}$ is odd. If $y\notin N$ then, by Theorem~\ref{theoPrimitive}, $\overline{G}$ is of affine type. Since $y\notin N$, it follows that the centre of $\overline{B}$ has even order and thus, by Lemma~\ref{lemma:cycliccenter}, $|\overline{T}|$ is odd.

In both cases, we have obtained that $\overline{G}$ is of affine type and $|\overline{T}|$ is odd. It follows that $|T:N|=|CT:C|$ is odd. Since $C$ has two orbits, namely $A$ and $R\setminus A$, and these have the same size, $CT$ is intransitive with the same orbits as $C$. As $|G:CT|=|B:C|=2$, $CT$ is normal in $G$ and it follows from  Lemma~\ref{2blocks} (applied with $H=CT$) that there are at most $2^{m/2+23n/48+o(n)}$ choices for  $S$ in this case.

Adding the results we obtained in the four cases, we find that the number of choices for $S$ is at most  $2^{m/2+23n/48+o(n)}$.
\end{proof}

\section{Cayley graphs on $\Q_8\times \C_2^\ell$}\label{sec:Q8 times E}

Before stating the main result of this section, we need to introduce some notation.

\begin{notation}\label{Q-notation}
Let $E$ be an elementary abelian $2$-group, let $\Q_8=\langle i,j\mid i^4=1, i^2=j^2,  i^j=i^{-1}\rangle$ and let $R=\Q_8\times E$. We label the elements of $\Q_8$ with $\{1,-1,i,-i,j,-j,k,-k\}$ in the usual way. Let $M = \langle-1 \rangle\times E$, let $n=|R|$ and let $m$ be the number of elements of order at most $2$ in $R$. We define the following permutations of $R$: for $\ell \in \{i,j,k\}$, $\alpha_\ell$  is the involution that swaps $\ell e$ and$-\ell e$ for every $e\in E$ and fixes every other element of $R$. Let  $B=\langle R, \alpha_i, \alpha_j,\alpha_k\rangle$, viewed as a permutation group on $R$ (with $R$ acting regularly on itself by right multiplication). 
\end{notation}

The importance of $B$ in this context can be seen with the following observation.

\begin{lemma}\label{lemma42}
Assume Notation~$\ref{Q-notation}$. Every Cayley graph on $R$ admits $B$ as a group of automorphisms.
\end{lemma}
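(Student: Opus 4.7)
My plan is to fix an arbitrary inverse-closed subset $S$ of $R$ and show that each of $\alpha_i,\alpha_j,\alpha_k$ preserves $\Cay(R,S)$. The right regular representation of $R$ is already contained in $\Aut(\Cay(R,S))$, so these three permutations together with $R$ will generate $B$ inside $\Aut(\Cay(R,S))$. By the obvious symmetry between $i,j,k$, it suffices to handle $\alpha_i$.

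The first convenient observation is that $\alpha_i$ can be written in the form $\alpha_i(g)=\tau(g)\cdot g$, where $\tau\colon R\to\{1,-1\}$ is defined by $\tau(g)=-1$ if $g\in\{\pm i\}\cdot E$ and $\tau(g)=1$ otherwise. Since $-1\in Z(R)$, both values of $\tau$ are central, and so
\[
\alpha_i(g)\alpha_i(h)^{-1}=\tau(g)\,g\,h^{-1}\tau(h)^{-1}=\tau(g)\tau(h)\cdot gh^{-1}.
\]
Hence whenever $\tau(g)=\tau(h)$, the map $\alpha_i$ sends the pair $(g,h)$ to $(gh^{-1})$-related pair, and the edge condition is trivially preserved.

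The remaining and only interesting case is $\tau(g)\tau(h)=-1$, i.e.\ exactly one of $g,h$ lies in $\{\pm i\}\cdot E$. In that case $\alpha_i(g)\alpha_i(h)^{-1}=-gh^{-1}$, so I need $-gh^{-1}\in S$ whenever $gh^{-1}\in S$. Here I would use the straightforward claim that in this case $gh^{-1}$ has order~$4$ in $R$: writing $g=\varepsilon_1 u e_1$, $h=\varepsilon_2 v e_2$ with $u\in\{i\}$ and $v\in\{1,j,k\}$ (or vice versa) and $\varepsilon_l\in\{\pm 1\}$, $e_l\in E$, the Quaternion multiplication table gives $gh^{-1}\in\{\pm i,\pm j,\pm k\}\cdot E$, which is precisely the set of order-$4$ elements of $R$. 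For any such element $s$ we have $s^{-1}=-s$, and so inverse-closedness of $S$ forces $-gh^{-1}\in S$ iff $gh^{-1}\in S$, as required.

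Putting these two cases together shows $\alpha_i\in\Aut(\Cay(R,S))$; by symmetry the same proof applies verbatim to $\alpha_j$ and $\alpha_k$, and the lemma follows. The only mildly technical step is the case analysis in the last paragraph, but it reduces to a handful of products in $\Q_8$, so I do not anticipate any real obstacle.
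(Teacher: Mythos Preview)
Your argument is correct and is essentially the same as the paper's: both compute $\alpha_i(g)\alpha_i(h)^{-1}$, observe that it equals $gh^{-1}$ unless exactly one of $g,h$ lies in $\{\pm i\}E$, in which case it equals $-gh^{-1}$, and then use that in this latter case $gh^{-1}$ has order~$4$ so that $-gh^{-1}=(gh^{-1})^{-1}\in S$. Your introduction of the function $\tau$ is just a convenient repackaging of the same computation.
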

\begin{proof}
Let $S$ be an inverse-closed subset of $R$, let  $x,y\in R$, let $s=xy^{-1}$ and write $x=qe$ and $y=rf$ with $q,r\in\Q_8$ and $e,f\in E$. Note that
$$ 
x^{\alpha_i}(y^{\alpha_i})^{-1}=
\begin{cases}
-s&\textrm{if } |\{q,r\}\cap\{i,-i\}|=1,\\
s&\textrm{otherwise.}
\end{cases}
$$
Moreover, if $|\{q,r\}\cap\{i,-i\}|=1$ then $-s=s^{-1}$ and thus $x^{\alpha_i}(y^{\alpha_i})^{-1}\in\{s,s^{-1}\}$ in all cases. This implies that $\alpha_i$ is an automorphism of $\Cay(R,S)$. By an analogous argument, the same is true for $\alpha_j$ and $\alpha_k$ and the result follows.
\end{proof}

The main result of this section is that almost all Cayley graphs on $R$ have $B$ as their full automorphism group. Before we state and prove this, we collect a few basic results about $B$ which will be  used repeatedly. The proofs are left to the reader.

\begin{lemma}\label{Q-basic}
Assume Notation~$\ref{Q-notation}$. Then 
\begin{enumerate}
\item $M=\Z B$, 
\item $|B:R|=8$, 
\item $B$ has exponent $4$, 
\item $m=|M|=n/4$, and
\item $R$ has exactly $2^{5n/8}$ inverse-closed subsets. \label{tata}
\end{enumerate}
\end{lemma}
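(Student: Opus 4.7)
The plan is to deduce all five claims from a structural description of $B$. The key auxiliary object is the subgroup $L := \langle M, \alpha_i, \alpha_j, \alpha_k\rangle \le B$, understood via the partition of $R$ into the four cosets $\Omega_q := qM$ for $q \in \{1,i,j,k\}$. Every element of $B$ preserves this partition: $R$ acts on $\{\Omega_1,\Omega_i,\Omega_j,\Omega_k\}$ via its quotient $R/M \cong \C_2\times\C_2$, and each $\alpha_\ell$ fixes every block setwise. For $q \in \{1,i,j,k\}$, let $\alpha_q^*$ denote the involution of $R$ sending $x \mapsto -x$ for $x \in \Omega_q$ and fixing $x$ otherwise; then $\alpha_\ell^* = \alpha_\ell$ for $\ell \in \{i,j,k\}$, and a direct check yields the key identity $\alpha_1^* = (-1)\alpha_i\alpha_j\alpha_k$ in $B$ (where $-1\in R$ acts by right multiplication).

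I would first tackle~(2). A case analysis confirms that the $\alpha_\ell$'s are pairwise commuting involutions (their supports are disjoint) and that each commutes with right multiplication by every element of $M$. Combined with $\langle\alpha_i,\alpha_j,\alpha_k\rangle \cap M = \{1\}$ (the $\alpha$'s fix $1$ pointwise, while non-identity elements of $M$ do not), this yields $L = M \times \langle\alpha_i,\alpha_j,\alpha_k\rangle$, elementary abelian of order $8|M| = 2n$, with $L \cap R = M$. The crucial step is that $R$ normalises $L$: for $r \in R$ and $\ell \in \{i,j,k\}$, a direct computation yields $r\alpha_\ell r^{-1} = \alpha_q^*$, where $q \in \{1,i,j,k\}$ is the projection of $\ell r$ to $R/M$, and each $\alpha_q^*$ lies in $L$ by the key identity. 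Therefore $B = RL$, so $|B| = |R|\cdot|L|/|M| = 8|R|$, which proves~(2).

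For~(1), the inclusion $M \le Z(B)$ is immediate. Conversely, I would show $C_B(L) = L$: since $L$ is abelian, $L \le C_B(L)$, and any $r\ell \in C_B(L)$ with $r\in R$, $\ell \in L$ forces $r$ to centralise each $\alpha_\ell$, which by the conjugation formula above requires $r\in M$, giving $r\ell \in ML = L$. Hence $Z(B) \le C_B(L) = L$, so $Z(B)$ is precisely the subgroup of $L$ fixed by $R$-conjugation. A direct computation (using the identity $\alpha_1^* = (-1)\alpha_i\alpha_j\alpha_k$, which contributes sign factors in the $R/M$-action on $L$) shows this fixed subgroup equals $M$, so $Z(B) = M$.

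Every element of $B = RL$ has the form $r\ell$. For~(3), $(r\ell)^2 = r^2\cdot(r^{-1}\ell r\cdot \ell)$; the second factor lies in $L$ (as $R$ normalises $L$) while $r^2 \in \{1,-1\}\le M = Z(B)$, so squaring once more gives $(r\ell)^4 = (r^2)^2(r^{-1}\ell r\cdot\ell)^2 = 1$, using that $L$ has exponent~$2$ and $(r^2)^2=1$. Parts~(4) and~(5) reduce to simple counts: elements of $R = \Q_8\times E$ of order at most $2$ are exactly $\{1,-1\}\times E = M$, giving $m = |M| = n/4$; and inversion on $R$ has $m$ fixed points and $(n-m)/2$ orbits of size~$2$, totalling $m + (n-m)/2 = 5n/8$ orbits, each independently in or out of an inverse-closed subset. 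The main obstacle lies in~(2): verifying that $R$ normalises $L$ requires carefully tracking how $R$-conjugation permutes the four block involutions, and recognising that the ``missing'' fourth one, $\alpha_1^*$, belongs to $L$ only thanks to the non-trivial identity $\alpha_1^* = (-1)\alpha_i\alpha_j\alpha_k$.
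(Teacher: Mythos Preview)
Your proof is correct. The paper itself gives no argument for this lemma, stating only that ``the proofs are left to the reader'', so there is nothing to compare against; your write-up is exactly the kind of verification the authors are implicitly asking for.

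Your organising idea---introducing $L=\langle M,\alpha_i,\alpha_j,\alpha_k\rangle$, identifying it as elementary abelian of order $2n$, and showing that $R$ normalises $L$ via the identity $\alpha_1^*=(-1)\alpha_i\alpha_j\alpha_k$---is the natural route, and it cleanly yields $B=RL$ with $|B:R|=|L:M|=8$. One very small quibble: the formula $r\alpha_\ell r^{-1}=\alpha_q^*$ with $q$ the projection of $\ell r$ holds under one composition convention, while under the other (right-action) convention $q$ is the projection of $\ell r^{-1}$; either way the conclusion $r\alpha_\ell r^{-1}\in\{\alpha_1^*,\alpha_i,\alpha_j,\alpha_k\}\subseteq L$ stands, so this does not affect the argument. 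Your deductions of~(1) via $C_B(L)=L$ and of~(3) via $(r\ell)^4=(r^2)^2(r^{-1}\ell r\cdot\ell)^2=1$ are both sound, and~(4),~(5) are indeed immediate counts once one notes that the elements of order at most~$2$ in $\Q_8\times E$ are exactly $\{\pm 1\}\times E=M$.
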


\begin{theorem}\label{Theo:mainQ}
Assume Notation~$\ref{Q-notation}$.  The number of inverse-closed subsets $S$ of $R$ such that $\Aut(\Cay(R,S))>B$ is at most $2^{5n/8-n/512+o(n)}$. In particular, the proportion of inverse-closed subsets $S$ of $R$ such that $\Aut(\Cay(R,S))=B$ goes to $1$ as $n\to\infty$.
\end{theorem}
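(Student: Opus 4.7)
Suppose $S$ is inverse-closed with $\Aut(\Cay(R,S)) > B$, and fix $G \le \Aut(\Cay(R,S))$ with $B$ maximal in $G$. By Lemma~\ref{Q-basic}, $B$ is a $2$-group of exponent $4$, $|B:R|=8$, and $Z(B) = M$ has order $n/4$; in total $R$ has $2^{5n/8}$ inverse-closed subsets. I would count the bad $S$ by splitting on whether $G_1$ centralises $M$ (equivalently, whether $M$ is central in $G$), in a manner parallel to Theorem~\ref{Theo:main not Q}.

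\textbf{First case: $G_1$ centralises $M$.} Then $M$ is central in $G$ and the four cosets $M, iM, jM, kM$ form a $G$-invariant block system. If $G_1$ moves some block, Lemma~\ref{win across blocks} applied with $M$ yields at most $2^{m/2+n/2-|M|/2+o(n)} = 2^{n/2+o(n)}$ bad $S$, well inside the target. Otherwise $G_1$ fixes every block setwise; the restriction of $G_1$ to each block lies in the centraliser of the regular $M$-action, hence in $M$ itself, and the restriction to the block $M$ is trivial because $G_1$ fixes the identity. This gives an embedding $G_1 \hookrightarrow M^3$ (capturing the action on $iM, jM, kM$) carrying $B_1$ to $\langle -1\rangle^3$. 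A witness $g \in G_1 \setminus B_1$ then acts on some coset $\ell M$ by right translation by an element $m \in M \setminus \langle -1\rangle$; inverse closure combined with $g$-invariance forces $S \cap \ell M$ to be invariant under the order-$4$ subgroup $\langle -1, m\rangle$, cutting the admissible subsets of that coset from $2^{|\ell M|/2}$ to $2^{|\ell M|/4}$. Summing over the at most $|M|^3 = 2^{o(n)}$ possible images of $g$ bounds this sub-case by $2^{9n/16+o(n)}$, also below the target.

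\textbf{Second case: $G_1$ does not centralise $M$.} Let $N$ be the core of $B$ in $G$. Maximality of $B$ makes $\overline{G} = G/N$ primitive with point-stabiliser $\overline{B} = B/N$ of exponent dividing $4$, so Corollary~\ref{cor3} places $\overline{G}$ in the affine family; Lemma~\ref{lemma:cycliccenter} together with the $2$-power order of $Z(\overline{B})$ forces the socle $\overline{T} = T/N$ to be elementary abelian of some odd prime order. Since $M \le Z(B)$ projects into the cyclic group $Z(\overline{B})$ and $M$ is elementary abelian of exponent $2$, we obtain $|M : M \cap N| \le 2$, so $M \cap N$ is a normal subgroup of $G$ of size at least $n/8$. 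I would then analyse the action of $G_1$ on the $(M \cap N)$-coset partition, splitting further according to whether $G_1$ centralises $M \cap N$ and whether it moves some coset; in the favourable sub-cases Lemma~\ref{win across blocks} with $M \cap N$ applies directly, while in the remaining ones one replays the Sub-case-b argument of the first case with $M \cap N$ in place of $M$, using the affine structure of $\overline{G}$ and the non-centralisation of $M$ to extract extra $S$-invariances.

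\textbf{Main obstacle.} The principal difficulty is the last sub-case of the second case, where $G_1$ may fail to centralise $M \cap N$ and may fix every $(M \cap N)$-coset, so neither Lemma~\ref{win across blocks} nor the block-embedding argument applies verbatim. One must use the primitive quotient $\overline{G}$ together with the non-centrality of $M$ to produce an additional invariance of $S$, and the weak constant $n/512$ in the statement appears to reflect the worst balance of block-counting savings against the $2^{o(n)}$ polynomial factor absorbed by summing over the choices of $N$, witnesses $g$, and socle image $\overline{T}$. Verifying that every bad $S$ falls into one of the above cases, and reconciling the overlapping counts, forms the final technical step.
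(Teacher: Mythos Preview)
Your first case ($G_1$ centralises $M$, equivalently $M$ central in $G$) is correct and essentially identical to the corresponding part of the paper's proof. The gap is in your second case. You assert that $M\cap N$ is a normal subgroup of $G$, but this is not justified: $N$ is normal in $G$, so $(M\cap N)^g=M^g\cap N$, and there is no reason for elements of $Z(B^g)\cap N$ to lie in $Z(B)$ when $M$ itself fails to be normal in $G$. Without normality of $M\cap N$ you have no $G$-invariant block system on which to run Lemma~\ref{win across blocks}, and the ``replay Sub-case~b'' fallback also breaks down, since that argument relied on $G_1$ centralising the block-inducing subgroup in order to embed $G_1$ into a power of it.

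The paper sidesteps this by splitting on whether $M$ is \emph{normal} in $G$ rather than whether it is central. If $M\trianglelefteq G$ but $G_1$ does not centralise $M$, a direct count suffices: some element of $G_1$ induces a nontrivial automorphism $\phi$ of $M$, $\phi$ fixes at most half of $M$, so there are at most $2^{3|M|/4}$ choices for $S\cap M$ and $2^{9n/16+o(n)}$ choices for $S$ in all. If $M$ is not normal, then $B$ is self-normalising in $G$, hence a Sylow $2$-subgroup, so $|G:B|$ is odd and $G_1$ is \emph{not} a $2$-group. The crucial construction is then $H=M\cap M^g$ for a single $g\in G_1\setminus B_1$: since $H\le Z(B)\cap Z(B^g)$, it is central in $\langle B,B^g\rangle=G$, so certainly normal, and one bounds $|R:H|\le 256$ via the affine quotient much as you bounded $|M:M\cap N|$. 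Finally, the sub-case you flag as the ``main obstacle'' (namely $G_1$ fixing every coset) does not arise at all: if $G_1$ lay in the kernel of the action on $H$-orbits then $HG_1=H\times G_1$ would be normal in $G$, forcing $\langle z^2:z\in G_1\rangle=1$ and making $G_1$ an elementary abelian $2$-group, a contradiction. Lemma~\ref{win across blocks} with this $H$ then gives the bound $2^{5n/8-n/512+o(n)}$.
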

\begin{proof}
By Lemma~\ref{Q-basic}(\ref{tata}), the number of inverse-closed subsets of $R$ is $2^{5n/8}$, hence the second part of the theorem follows from the first.

Let $S$ be an inverse-closed subset of $R$ such that $\Aut(\Cay(R,S))>B$, let $G$ be a subgroup of $\Aut(\Cay(R,S))$ containing $B$ as a maximal subgroup and let $g\in G_1\setminus B_1$.  As $B$ is maximal in $G$, we have $G=\langle B,g\rangle$.

\smallskip
\noindent\textbf{Case 1.} $M$ is normal in $G$.
\smallskip

Suppose that $M$ is not centralised  by $G_1$. In particular, there is some element of $G_1$ the action of which by conjugation on $M$ induces a non-trivial automorphism $\phi$ of $M$.  Since $\phi$ fixes at most half of the elements of $M$, the number of subsets of $M$ which are $\phi$-invariant is at most $2^{|M|/2}2^{|M|/4}=2^{3|M|/4}=2^{3n/16}$. By Lemma~\ref{counting subgroups}, there are at most $2^{o(n)}$ choices for $\phi$ and thus at most $2^{3n/16+o(n)}$ choices for $M \cap S$. Since all of the involutions of $R$ are in $M$, the number of inverse-closed subsets of $R\setminus M$ is $2^{(n-|M|)/2}=2^{3n/8}$. This is an upper bound for the number of choices for $(R\setminus M)\cap S$. The number of choices for $S$ is thus at most $2^{9n/16+o(n)}$ in this case.

We now assume that $M$ is centralised by $G_1$. Suppose that $G_1$ is not contained in the kernel of the action of $G$ on $M$-orbits. By Lemma~\ref{win across blocks} there are at most $2^{m/2+n/2-|M|/2+o(n)}=2^{n/2+o(n)}$ choices for $S$ in this case.

We now assume that $G_1$ fixes every $M$-orbit setwise. In what follows, for $r\in R$ and $h\in G$, we write $r^h$ to denote the image of the vertex $r$ under the permutation $h$, and $h^{-1}rh$ to denote the conjugate of the permutation $r\in G$ by the element $h$. Let $i^g=iq_i$, $j^g=jq_j$ and $k^g=kq_k$, where $q_i,q_j,q_k \in M$.  Since $g$ centralises $M$, we have $(\ell m)^g=\ell^g m=\ell q_\ell m$ for every $m \in M$ and every $\ell\in \{i,j,k\}$. It follows that $g$ is determined by $q_i$, $q_j$ and $q_k$ and thus there are at most $|M|^3\leq 2^{o(n)}$ choices for $g$.

Suppose that $q_i,q_j,q_k\in \{-1,1\}$. Note that $\alpha_i,\alpha_j\in B_1$. By replacing $g$ by an element of $\{g,g\alpha_i,g\alpha_j, g\alpha_i\alpha_j\}$, we may assume that $q_i=q_j=1$.  Since $g\neq 1$, we have $q_k=-1$ and hence $g=\alpha_k$, contradicting the fact that $g\notin B_1$. 

We may thus assume that $q_{\bar{\ell}} \notin \{-1,1\}$ for some $\bar{\ell}\in \{i,j,k\}$. It follows that, for every $m\in M$, we have $(\bar{\ell}m)^g\notin \{\bar{\ell}m,(\bar{\ell}m)^{-1}\}$. Since $\bar{\ell}M\cap S$ is inverse-closed and preserved by $g$, there are at most $2^{|\bar{\ell}M|/4}=2^{n/16}$ choices for $\bar{\ell}M\cap S$. As the number of choices for $M\cap S$ is at most $2^{n/4}$ and the number of choices for $\ell M\cap S$ is at most $2^{n/8}$ for each $\ell\in\{i,j,k\}\setminus\{\bar{\ell}\}$, there are at most $2^{9n/16+o(n)}$ choices for $S$ in this case.

\smallskip
\noindent\textbf{Case 2.} $M$ is not normal in $G$.
\smallskip

Since $M=\Z B$, $B$ is not normal in $G$ either. Since $B$ is maximal but not normal in $G$, it must be self-normalising in $G$ and thus $\langle B,B^g\rangle = G$. Moreover, since $B$ is a $2$-group, it follows that $B$ must be a Sylow $2$-subgroup of $G$ and hence $|G:B|$ is odd.  In particular, by the orbit-stabiliser theorem, $G_1$ is not a $2$-group.

Let  $H=M\cap M^g$.  Since $M= \Z B$, $H$ is central in $B$. By the same reasoning, $H$ is central in $B^g$ and thus in  $\langle B,B^g\rangle=G$.

Let $N$ be the core of $B$ in $G$. Note that $H\leq N< B$ and, since $B$ is maximal in $G$, we can view $G/N$ as a primitive permutation group with point-stabiliser $B/N$. As $B$ has exponent $4$, $B/N$ has exponent dividing $4$ and hence, by Corollary~\ref{cor3}, $G/N$ is of affine type. Moreover, since $MN/N$ is contained in the centre of $B/N$, it follows from Lemma~\ref{lemma:cycliccenter} that $MN/N$ is cyclic. Since $M$ is an elementary abelian $2$-group, so is $MN/N$ and hence $|MN:N|\leq 2$. 

As $|B:M| = 32$, we have $|MN:M|\leq 32$. It follows that $|N:N\cap M|\leq 32$  and hence $|N\cap M^g:N\cap M\cap M^g|=|N\cap M^g:H|\leq 32$. Applying $g^{-1}$ to $N\cap M^g$ and $H$, we obtain $|N\cap M:H|\leq 32$. As $|MN:N|\leq 2$, we have $|M:N\cap M|\leq 2$ and hence $|M:H|=|M:N\cap M||N\cap M:H|\leq 2\cdot 32=64$.  Finally, $|R:M|=4$ and hence $|R: H| \le 256$.

Suppose that $G_1$ is contained in the kernel of the action of $G$ on $H$-orbits. It follows that $HG_1$ is normal in $G$. Since $H$ is central in $G$ and $H\cap G_1=1$, we have $HG_1=H\times G_1$. As $HG_1$ is normal in $G$, we have $\langle z^2\mid z\in HG_1\rangle=\langle z^2\mid z\in G_1\rangle$ is normal in $G$ and hence $\langle z^2\mid z\in G_1\rangle=1$ because $G_1$ is core-free in $G$. Thus $G_1$ is an elementary abelian $2$-group, which is a contradiction. 

We may thus assume that $G_1$ is not contained in the kernel of the action of $G$ on $H$-orbits. It then follows from Lemma~\ref{win across blocks} that there are at most $2^{m/2+n/2-|H|/2+o(n)}=2^{5n/8-n/512+o(n)}$ choices for $S$ in this case.

Adding the results we obtained in the four cases, we find that the number of choices for $S$ is at most  $2^{5n/8-n/512+o(n)}$.
\end{proof}

Combined together, Theorems~\ref{Theo:main not Q} and~\ref{Theo:mainQ} yield Theorem~\ref{th:main}.

\section{Related results: Unlabelled graphs, and digraphs}\label{sec:Sec1proofs}

An  \emph{unlabelled} graph is simply an isomorphism class of (labelled) graphs. We often identify a representative with its class. An easy consequence of Theorems~\ref{Theo:main not Q} and~\ref{Theo:mainQ} is the following unlabelled version of Theorem~\ref{th:main}.

\begin{theorem}\label{th:MaIN}
Let $R$ be a generalised dicyclic group of order $n$, $B=R\rtimes\langle\iota\rangle$ if $R\ncong \Q_8\times \C_2^\ell$ and let $B$ be as in Notation~$\ref{Q-notation}$ if $R\cong \Q_8\times \C_2^\ell$. The proportion of unlabelled Cayley graphs $\Gamma$ over $R$ such that $\Aut(\Gamma) = B$ goes to $1$ as $n\to\infty$.
\end{theorem}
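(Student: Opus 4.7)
The plan is to derive Theorem~\ref{th:MaIN} from the labelled versions Theorems~\ref{Theo:main not Q} and~\ref{Theo:mainQ} by controlling the size of each isomorphism class. Crucially, Theorems~\ref{th:epsilon1} and~\ref{th:epsilon2} provide failure probabilities that decay exponentially in $n$, so a $2^{o(n)}$ overcount within each isomorphism class will be affordable.

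Let $\mathcal{I}$ be the collection of inverse-closed subsets of $R$ and let $\mathcal{S}$ be the subset consisting of $S\in\mathcal{I}$ with $\Aut(\Cay(R,S))=B$ (as a subgroup of $\Sym(R)$ via the right regular representation of $R$). For $S,S'\in\mathcal{S}$, any graph isomorphism $\psi\in\Sym(R)$ from $\Cay(R,S)$ to $\Cay(R,S')$ satisfies $\psi B\psi^{-1}=B$ and hence lies in $N:=\norm{\Sym(R)}{B}$. The fibres of the map $\psi\mapsto \psi(\Cay(R,S))$ on $N$ have size $|B|$, so for each fixed $S$ there are at most $|N|/|B|$ elements $S'\in\mathcal{S}$ with $\Cay(R,S)\cong\Cay(R,S')$.

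The key estimate is $|N|/|B|\le 2^{o(n)}$. The conjugation homomorphism $N\to\Aut(B)$ has kernel $\cent{\Sym(R)}{B}$, which is contained in $\cent{\Sym(R)}{R}$ because $R\le B$ acts regularly on itself; the latter is the left regular representation of $R$, of order $n$. Thus $|N|\le n\cdot|\Aut(B)|$. Since $|B|\le 8n$, the same argument as in Lemma~\ref{counting subgroups} yields $|\Aut(B)|\le|B|^{\log_2|B|}=2^{O((\log n)^2)}=2^{o(n)}$, and combined with $|B|\ge 2n$ this gives $|N|/|B|\le 2^{o(n)}$ as required.

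Combining these ingredients, the number of unlabelled Cayley graphs on $R$ with $\Aut(\Gamma)=B$ is at least $|\mathcal{S}|/2^{o(n)}$, while the number of unlabelled Cayley graphs with $\Aut(\Gamma)\ne B$ is at most $|\mathcal{I}\setminus\mathcal{S}|$, since every Cayley representative of such a class lies outside $\mathcal{S}$. Using the explicit bound $|\mathcal{I}\setminus\mathcal{S}|\le 2^{-cn+o(n)}|\mathcal{I}|$ (with $c=1/48$ or $c=1/512$, according to the case) coming from Theorems~\ref{th:epsilon1} and~\ref{th:epsilon2}, the ratio of exceptional to good unlabelled classes is at most $2^{-cn+o(n)}$, which tends to $0$. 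The main obstacle is the correct interpretation of ``$\Aut(\Gamma)=B$'' for an unlabelled graph $\Gamma$, which we take to mean that $\Aut(\Gamma)$ is conjugate to $B$ in $\Sym(R)$, i.e.\ some labelled representative has automorphism group equal to $B$; once this convention is fixed, the argument is a standard normaliser counting estimate and requires no further delicate group theory beyond what was already developed for the labelled case.
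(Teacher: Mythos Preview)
Your proof is correct and follows essentially the same approach as the paper: both show that each isomorphism class of a Cayley graph with automorphism group $B$ contains at most $2^{o(n)}$ labelled representatives, by observing that any isomorphism between two such graphs normalises $B$ in $\Sym(R)$ and then bounding the normaliser via $|\Aut(B)|$. The only cosmetic difference is that the paper bounds the number of induced automorphisms $\phi\in\Aut(B)$ by first pinning down $R^\phi$ via a characteristic subgroup $X\le R$ of bounded index and then invoking $|\Aut(R)|\le 2^{o(n)}$, whereas you bound $|\Aut(B)|\le 2^{o(n)}$ directly; your route is slightly cleaner and avoids the case distinction on $X$.
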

\begin{proof}
Let $\Gamma_1=\Cay(R,S_1)$ with $\Aut(\Gamma_1)=B$. We show that the number of inverse-closed subsets $S_2$ of $R$ such that $\Gamma_1\cong\Cay(R,S_2)$ is at most $2^{o(n)}$. The result then follows from Theorems~\ref{Theo:main not Q} and~\ref{Theo:mainQ}.

Let $\Gamma_2=\Cay(R,S_2)$ and let $\varphi$ be a graph isomorphism from $\Gamma_1$ to $\Gamma_2$. Note that $\varphi$ induces a group isomorphism, $\phi$ say, from $\Aut(\Gamma_1)=B$ to $\Aut(\Gamma_2)=B$ and hence $\phi \in \Aut(B)$.  

Note that there exists a characteristic subgroup $X$ of $B$ such that $X\leq R$ and $|B:X|\leq 32$. (If $R\ncong \Q_8 \times \C_2^\ell$ then take $X=R$ and use Lemma~\ref{basic-sec3}(\ref{RCharB}), if $R\cong \Q_8\times \C_2^\ell$ then take $X=M$ as in Notation~\ref{Q-notation}.) It follows that $X\leq R^\phi$. Since  $|B:X|\leq 32$, it follows that $B$ has at most $O(1)$ subgroups containing $X$ and thus there are at most $O(1)$ choices for the subgroup $R^\phi$. As $|\Aut(R)|\leq 2^{o(n)}$, there are at most $2^{o(n)}$ choices for an isomorphism from $R$ to a given $R^\phi$ and thus at most $2^{o(n)}$ choices for $\phi$ (and hence for $\varphi$ and $S_2$).
\end{proof}

One can define \emph{Cayley digraphs} in the obvious way: if $S$ is a  (not necessarily inverse-closed) subset of a group $R$ then $\Cay(R,S)$ is the digraph with vertex-set $R$ and with $(g,h)$ being an arc if and only if $gh^{-1}\in S$. Our proof of Theorem~\ref{th:main} with a few minor adjustments yields the corresponding directed version.

\begin{theorem}\label{th:MAIN}
Let $R$ be a generalised dicyclic group of order $n$. The proportion of subsets $S$ of $R$ such that $\Aut(\Cay(R,S)) = R$ goes to $1$ as $n\to\infty$.
\end{theorem}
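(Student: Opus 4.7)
The plan is to retrace the proofs of Theorems~\ref{Theo:main not Q} and~\ref{Theo:mainQ} with two systematic changes: replace every occurrence of ``inverse-closed subset'' by ``arbitrary subset,'' and track that the target automorphism group is $R$ rather than $B$. The total number of candidate connection sets becomes $2^n$ (rather than $2^{m/2+n/2}$ or $2^{5n/8}$), and every intermediate count of the form $2^{m/2+k/2}$ (inverse-closed subsets of a set of size $k$) is replaced by $2^k$ (arbitrary subsets).

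First I would separately dispose of the subsets $S$ for which $R < \Aut(\Cay(R,S)) \leq B$. A direct analysis shows this forces $\Aut(\Cay(R,S))=B$: in the non-$\Q_8$ case because $B/R\cong \C_2$, and in the $\Q_8$ case because each of $\alpha_i,\alpha_j,\alpha_k$ is a digraph automorphism of $\Cay(R,S)$ precisely when $(R \setminus M) \cap S$ is inverse-closed, a condition symmetric in the three subscripts. In either situation, $B \leq \Aut$ imposes an explicit constraint ($(R \setminus A) \cap S$ or $(R\setminus M) \cap S$ being inverse-closed, respectively), and a direct count gives at most $2^{3n/4}$ or $2^{5n/8}$ such subsets, both $o(2^n)$.

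For the remaining case $\Aut(\Cay(R,S))\not\subseteq B$, I would take $G\leq\Aut(\Cay(R,S))$ with $R$ maximal in $G$ and $G \not\subseteq B$, and rerun the case analyses of Theorems~\ref{Theo:main not Q} and~\ref{Theo:mainQ} with the core of $R$ in $G$ playing the role previously played by the core of $B$. The supporting lemmas admit directed analogs: the digraph version of Lemma~\ref{win across blocks} gives bound $2^{n-|M|+o(n)}$ (identical proof, counting all subsets of $R\setminus rM$ rather than inverse-closed ones), and the digraph version of Lemma~\ref{2blocks} invokes a directed analog of~\cite[Theorem~1.5]{DSV} for abelian groups in place of its undirected counterpart. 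Theorem~\ref{theoPrimitive} applies unchanged, as it is a statement about abstract primitive groups.

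The main obstacle is securing the directed analog of~\cite[Theorem~1.5]{DSV} for abelian groups, since it is the key input to the digraph version of Lemma~\ref{2blocks}. This result is not explicitly stated in~\cite{DSV} but follows from a verbatim adaptation of the methods used there, since the structural analysis of overgroups of a regular abelian group in its symmetric group is unchanged, and only the counting constants shift when one switches from counting inverse-closed subsets to counting arbitrary subsets. With this input, the numerical bookkeeping transforms each undirected bound of the form $2^{m/2+cn+o(n)}$ (with $c<1/2$) into a directed bound of the form $2^{n-(1/2-c)n+o(n)}$, preserving the $o(2^n)$ error.
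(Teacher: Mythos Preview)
Your overall approach matches the paper's, which simply states that the proof of Theorem~\ref{th:main} ``with a few minor adjustments yields the corresponding directed version.'' Your proposal is thus more detailed than what the paper provides, and the strategy---replacing inverse-closed subsets by arbitrary subsets and $B$ by $R$ throughout---is the intended one. However, two of your specific claims are inaccurate.

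First, in the $\Q_8\times\C_2^\ell$ case, it is \emph{not} true that $R<\Aut(\Cay(R,S))\leq B$ forces $\Aut(\Cay(R,S))=B$. For instance, $\alpha_i\alpha_j$ is a digraph automorphism precisely when $iM\cap S$ and $jM\cap S$ are inverse-closed, with no constraint on $kM\cap S$; so one can have $\alpha_i\alpha_j\in\Aut$ while $\alpha_k\notin\Aut$. Your symmetry argument only shows that $\alpha_i\in\Aut$, $\alpha_j\in\Aut$, and $\alpha_k\in\Aut$ are equivalent conditions, not that every nontrivial element of $B_1$ imposes the full constraint. The fix is easy: any nontrivial element of $B_1$ forces at least two of the three cosets $iM,jM,kM$ to have inverse-closed intersection with $S$, which already gives at most $2^{3n/4+o(n)}$ subsets.

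Second, your reduction ``take $G\leq\Aut$ with $R$ maximal in $G$ and $G\not\subseteq B$'' is not automatic when $\Aut\not\subseteq B$: if $B\leq\Aut$ and $B$ happens to be the unique minimal overgroup of $R$ inside $\Aut$, no such $G$ exists. The cleaner split is $B\leq\Aut$ (which forces $(R\setminus A)\cap S$, respectively $(R\setminus M)\cap S$, to be inverse-closed, already giving $o(2^n)$ subsets) versus $B\not\leq\Aut$; in the latter case any minimal overgroup $G$ of $R$ in $\Aut$ automatically satisfies $G\not\leq B$. Finally, note that Theorem~\ref{theoPrimitive} is stated for quotients of $B$, not of $R$; what you actually need for your core-of-$R$ argument is the easier fact that primitive groups with abelian or generalised dicyclic point-stabiliser are affine, which follows directly from Lemmas~\ref{lemma:Fr} and~\ref{prim2}.
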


\section{Proof of Theorem~\ref{theoPrimitive}}\label{sec:Proof of Theo}

This section is dedicated solely to the proof of Theorem~\ref{theoPrimitive}. We first need a few preliminary results.

\begin{lemma}\label{prim2}
A primitive permutation group with generalised dicyclic point-stabilisers is of affine type.
\end{lemma}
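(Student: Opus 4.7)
My plan is to argue by contradiction: assume $G$ is a primitive permutation group with generalised dicyclic point-stabiliser $G_1 = \Dic(A,y,x)$ but $G$ is not of affine type. Since $A$ is an abelian normal subgroup of $G_1$ of index $2$, the group $G_1$ is solvable, which already restricts the O'Nan--Scott type of $G$.

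First, I would rule out the simple diagonal, compound diagonal, twisted wreath, and holomorph-of-simple-group types. In each of these the point-stabiliser must contain a subgroup isomorphic to a diagonal copy of the non-abelian simple socle-factor $T$, contradicting the solvability of $G_1$. This leaves two possibilities: $G$ is of almost simple or of product action type.

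Next, for the almost simple case with socle $T$, I would use the fact that by Lemma~\ref{basic-R}(5) the involution $y$ lies in $\Z{G_1}$, so $G_1 \le \cent G y$; by maximality of $G_1$ in $G$, either $\cent G y = G$ or $\cent G y = G_1$. The first possibility forces $y$ to be central in $G$, which together with the faithfulness of the action and $y \ne 1$ yields a contradiction (any central element fixing a point fixes every point). So $\cent G y = G_1$, i.e., $G_1$ is the centraliser of an involution in an almost simple group. Using the classification of finite simple groups, the structure of involution centralisers, and the lists of maximal subgroups of almost simple groups (in the same spirit as Lemma~\ref{lemma3}, though with exponent no longer restricted to $4$), I would exclude the possibility that such a centraliser is generalised dicyclic.

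Finally, the product action case is reduced to the almost simple one: if $G \le H \wr \Sym(\ell)$ with $H$ almost simple primitive on $\Delta$ and $\ell \ge 2$, then $G_1$ has a subgroup of index at most $\ell!$ contained in $H_\delta^\ell$, and each coordinate projection lands in the maximal subgroup $H_\delta$ of the almost simple $H$. Exploiting that $G_1$ has a normal abelian subgroup of index $2$, one obtains strong constraints on $H_\delta$, ultimately contradicting the analysis of the almost simple case. The main obstacle is the almost simple step, where a case analysis of involution centralisers and maximal subgroups via the classification of finite simple groups requires careful handling.
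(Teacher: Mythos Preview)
Your strategy is plausible in outline but is both incomplete and vastly heavier than what the paper actually does. The crucial almost simple step---showing that no involution centraliser in an almost simple group is generalised dicyclic and maximal---is left as ``requires careful handling''; without that analysis carried out, you do not have a proof. The product action reduction is also only sketched.

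More importantly, the paper's proof of this lemma is a short, entirely elementary argument that avoids the O'Nan--Scott theorem and the classification of finite simple groups altogether. Here is the idea. Write $R=\Dic(A,y,x)$ for the point-stabiliser. If $R\cap R^g=1$ for every $g\in G\setminus R$, then $G$ is a Frobenius group with complement $R$; primitivity forces the Frobenius kernel to be elementary abelian, so $G$ is of affine type. Otherwise pick $g\in G\setminus R$ with $R\cap R^g\neq 1$. Since $R$ is maximal and hence self-normalising, $R^g\neq R$ and $\langle R,R^g\rangle=G$. Every subgroup of $A$ is normal in $R$ (because $A$ is abelian and $x$ inverts it), so $A\cap A^g\trianglelefteq R$; by symmetry $A\cap A^g\trianglelefteq R^g$, hence $A\cap A^g\trianglelefteq G$, and since it lies in the core-free stabiliser $R$ we get $A\cap A^g=1$. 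But $R\cap R^g\neq 1$ while $|R:A|=|R^g:A^g|=2$, so $R\cap R^g$ is not contained in one of $A$, $A^g$; by symmetry take $ax\in R\cap R^g$ for some $a\in A$. Then $(ax)^2=y$ lies in $A$ and, being a square in $R^g$, also in $A^g$, contradicting $A\cap A^g=1$.

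So the key trick you are missing is to exploit the very special normal structure of a generalised dicyclic group directly (every subgroup of $A$ is normal in $R$, and every element outside $A$ squares to the fixed involution $y$), rather than appealing to global classification results. Your centraliser observation $\cent G y=G_1$ is correct and related in spirit, but the paper leverages it through the conjugate-intersection argument above instead of a case analysis over simple groups.
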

\begin{proof}
Let $R=\Dic(A,y,x)$ be a point-stabiliser of the primitive group $G$. Suppose first that $R \cap R^g=1$ for every $g \in G\setminus R$. Then, $G$ is a Frobenius group with complement $R$. Since $G$ is primitive, the Frobenius kernel is an elementary abelian group and $G$ is of affine type. We may thus assume that there exists $\bar{g}\in G\setminus R$ with $R\cap R^{\bar{g}}\neq 1$.

Since $R$ is a maximal subgroup of $G$, $R$ is self-normalising in $G$. It follows that $R^{\bar{g}} \neq R$ and hence $\langle R,R^{\bar{g}}\rangle=G$. Since every subgroup of $A$ is normal in $R$, $A \cap A^{\bar{g}}\trianglelefteq R$. For the same reason, $A \cap A^{\bar{g}}\trianglelefteq R^g$ and thus $A \cap A^{\bar{g}}\trianglelefteq G$. Since $A \cap A^{\bar{g}}$ is contained in the point-stabiliser $R$, it follows that $A \cap A^{\bar{g}}=1$.

In particular,  we have either $R\cap R^{\bar{g}}\nleq A$ or $R\cap R^{\bar{g}}\nleq A^{\bar{g}}$. By symmetry, we may assume that the former holds. As $|R:A|=2$, we get $R=A(R\cap R^{\bar{g}})$ hence $ax\in R\cap R^{\bar{g}}$ for some $a\in A$. The square of every element of $R^{\bar{g}}$ lies in $A^{\bar{g}}$ thus $(ax)^2=y\in A^{\bar{g}}$. Since $y\in A$, this contradicts the fact that $A\cap A^{\bar{g}}=1$.
\end{proof}

\begin{lemma}\label{hell}Assume Notation~$\ref{notation2}$. A primitive permutation group with point-stabiliser $B$ is of affine type.
\end{lemma}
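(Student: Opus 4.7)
The plan is to mirror the argument of Lemma~\ref{prim2}, with the crucial new ingredient being that $B/A\cong \C_2\times \C_2$ has exponent $2$. First I would split on whether $B$ is a Frobenius complement in $G$.

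If $B\cap B^g=1$ for every $g\in G\setminus B$, then $G$ is a Frobenius group with complement $B$. Its Frobenius kernel is nilpotent by Thompson's theorem, and since $G$ is primitive this kernel is the unique minimal normal subgroup of $G$ and hence elementary abelian; thus $G$ is of affine type.

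Otherwise, fix $\bar g\in G\setminus B$ with $H:=B\cap B^{\bar g}\neq 1$. Since $B$ is maximal and self-normalising in $G$, we have $B^{\bar g}\neq B$ and $\langle B,B^{\bar g}\rangle=G$. By Lemma~\ref{basic-sec3}(\ref{bebe}), every subgroup of $A$ is normal in $B$ and every subgroup of $A^{\bar g}$ is normal in $B^{\bar g}$, so $A\cap A^{\bar g}$ is normal in $G$; being contained in the core-free subgroup $B$, it is trivial. For each $h\in H$, since $h\in B$ and $B/A$ has exponent $2$ we have $h^2\in A$, and similarly $h^2\in A^{\bar g}$; hence $h^2\in A\cap A^{\bar g}=1$, so $H$ is an elementary abelian $2$-group.

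To finish, I would analyse where the involutions of $B$ live. The coset $xA$ contains no involutions (its elements have order $4$ and square to $y\neq 1$), so the involutions of $B$ lie in $A_2\cup \iota A_2\cup \iota xA$, where $A_2=\{a\in A:a^2=1\}$. A short coset calculation in $B/A$ shows that the product of any element of $\iota A$ with any element of $\iota xA$ lies in $xA$, so $H$ cannot meet both cosets; combined with $H\cap A\le A_2$, this forces $H$ into one of the elementary abelian $2$-subgroups $A_2\langle\iota\rangle$ or $A_2\langle\iota xa\rangle$ (for some $a\in A$) of $B$, with a symmetric constraint inside $B^{\bar g}$.

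The main obstacle is turning these structural constraints into a contradiction. My intended approach is to exploit the characteristic subgroup $\langle y\rangle$ of $B$ (from Lemmas~\ref{basic-R}(\ref{yeye}) and~\ref{basic-sec3}(\ref{RCharB})), the fact that the conjugates of $A$ in $G$ intersect pairwise trivially (since $N_G(A)=B$), and that $y$ is central in $B$: the goal is to show that a non-trivial $H$ of the restricted shape above would either force $\langle y\rangle$ to become normal in $G$ (impossible, since $B$ is core-free) or produce a normal elementary abelian subgroup of $G$ acting regularly on the coset space, giving $G$ of affine type and completing the proof.
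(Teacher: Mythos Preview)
Your initial reductions are correct: the Frobenius case is fine, the argument that $A\cap A^{\bar g}=1$ and hence that $H=B\cap B^{\bar g}$ is an elementary abelian $2$-group is valid, and your description of where the involutions of $B$ sit is accurate. In fact, your observation that $h^2\in A\cap A^{\bar g}=1$ for every $h\in H$ already yields, applied to all $g\in G\setminus B$, that every element of $B$ of order greater than $2$ fixes a unique coset; in particular $x$ does, and since $x$ has order $4$ this forces $|G:B|$ to be odd. That is precisely the pivot the paper uses.

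However, your final paragraph is not a proof but a hope, and this is where the genuine gap lies. Knowing that each $B\cap B^g$ is elementary abelian and sits inside $A_2\langle\iota\rangle$ or $A_2\langle\iota xa\rangle$ does not by itself force $\langle y\rangle$ to be normal in $G$, nor does it manufacture a regular normal elementary abelian subgroup. The paper does \emph{not} finish by an elementary coset argument of the type you sketch: once odd degree is established it invokes the Liebeck--Saxl classification of primitive groups of odd degree to reduce to the almost simple and product action cases, and then disposes of these using further structural classification (Brauer--Suzuki on quaternion Sylow $2$-subgroups, Walter's theorem on abelian Sylow $2$-subgroups, and the Li--Zhang description of maximal soluble subgroups of almost simple groups). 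There is no indication that an elementary route avoiding this machinery exists, and your plan gives no mechanism for producing one. As it stands, the proposal is incomplete at exactly the hard step.
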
 
\begin{proof}
Let $G$ be a primitive group with point-stabiliser $B$ and, towards a contradiction, suppose that $G$ is not of affine type. 

Suppose that $x\in B\cap B^g$ for some $g\in G\setminus B$. Since the square of an element of order $4$ in $B$ in necessarily central in $B$, $x^2$ is central in $B$. A similar reasoning implies that $x^2$ is central in $B^g$ and thus in $\langle B,B^g\rangle=G$. Since $B$ is core-free in $G$ and $1\neq x^2=y\in B$, we reach a contradiction. 

It follows that for every $g\in G\setminus B$, $x\notin B\cap B^g$. In particular, $x$ fixes only one point. Since $x$ has order $4$, this implies that $G$ has odd degree. It then follows from~\cite[Theorem]{Liebeck} that $G$ is either of almost simple or of product action type. In particular, $G$ has a unique minimal normal subgroup $N$.

\smallskip
\noindent\textbf{Case 1.} $G$ is of almost simple type.
\smallskip

In this case, the structure of $N$ and $B\cap N$ are described in~\cite[Theorem, Part~(b)]{Liebeck}. In order to use this classification more effectively, we first make some observations about $B\cap N$.

It follows  from~\cite[Theorem~$1$]{LiebeckS} that $\Z B$ is cyclic. Since all elements of order at most $2$ in $A$ are central in $B$, it follows that $y$ is the unique involution in $A$ and the Sylow $2$-subgroup of $A$ is cyclic. Let $a$ be a generator of the Sylow $2$-subgroup of $A$,  let $2^\ell$ be the order of $a$ and let $B_2=\langle a,x,\iota\rangle$. Clearly, $B_2$ is a Sylow $2$-subgroup of $B$, and hence of $G$. Thus $B_2\cap N$ is a Sylow $2$-subgroup of $N$. 

Suppose that $B_2\cap N\leq R=\langle A,x\rangle$, that is, $B_2\cap N\leq \langle a,x\rangle$. Observe that $\langle a,x\rangle$ is a generalised quaternion group. Thus $B_2\cap N$ is either cyclic or a generalised quaternion group. By~\cite{BrauerSuzuki}, a non-abelian simple group cannot have a cyclic or generalised quaternion group Sylow $2$-subgroup, which is a contradiction.

Suppose that $B_2\cap N\leq C=A\times \langle\iota\rangle$, that is, $B_2\cap N\leq \langle a\rangle\times\langle \iota\rangle$.  Then $N$ has an abelian Sylow $2$-subgroup and hence, by the remarkable theorem of Walter~\cite{Walter}, $N$ is isomorphic either to $\PSL(2,2^f)$ for some $f\geq 3$, to $\PSL(2,q)$ for some $q\equiv 3,5\pmod 8$, to the Janko  group $J_1$, or to a Ree group $\mathrm{Ree}(3^{2m+1})$ for some $m\geq 1$. Since $B_2\cap N$ is $2$-generated, a quick inspection reveals that $N\cong \PSL(2,q)$  for some $q\equiv 3,5\pmod 8$. In particular, $|B_2\cap N|=4$ and hence $B_2\cap N=\langle y\rangle\times\langle\iota\rangle$. As $q\equiv 3,5\pmod 8$, $q$ is not a square and hence $\Out(N)\cong\Out(\PSL(2,q))$ is cyclic of odd order. Since $G/N$ is isomorphic to a subgroup of $\Out(N)$, so is $B_2N/N\cong B_2/(B_2\cap N)$ and hence $B_2\leq N$. This is a contradiction since $B_2$ has order at least $8$ while $B_2\cap N$ has order $4$.

We conclude that $B_2\cap N\leq\langle a,\iota,x\rangle$ but $B_2\cap N\nleq \langle a, \iota\rangle$ and $B_2\cap N\nleq \langle a,x\rangle$. Combining this information with the description of the primitive almost simple groups of odd degree in~\cite[Theorem, Part~(b)]{Liebeck} and the classification of the maximal solvable subgroups of almost simple groups in~\cite{LiZ2011} yields that $N\cong\PSL(2,q)$, and either $B\cap N\cong \D_{(q+1)/2}$ and $q\equiv 3\pmod 4$, or $B\cap N\cong \D_{(q-1)/2}$ and $q\equiv 1\pmod 4$. 

If $q\in \{5,7,9\}$ then the conclusion follows by computation (no group $G$ with $N\leq G\leq \PGammaL(2,q)$ has a Sylow $2$-subgroup isomorphic to a Sylow $2$-subgroup of $B$). We thus assume that $q\geq 11$ and write $\varepsilon=1$ if  $q\equiv 1\pmod 4$ and $\varepsilon=-1$ if $q\equiv 3\pmod 4$.  Since $A\times \langle\iota\rangle$ is an abelian subgroup of index $2$ in $B$, $(A\times \langle\iota\rangle)\cap N$ is an abelian subgroup of index at most $2$ in $B\cap N=\D_{(q-\varepsilon)/2}$. Let  $A_0=(A\times \langle\iota\rangle)\cap N$. As $q-\varepsilon\geq 10$, $\D_{(q-\varepsilon)/2}$ has a unique abelian subgroup of index at most $2$ and hence $A_0 \cong \C_{(q-\varepsilon)/2}$ and, in particular, $A_0$ is a maximal torus of $N$. After two computations, one for the case $\varepsilon=1$ and one of the case $\varepsilon=-1$, we see that $\cent{\PGammaL(2,q)}{A_0}\leq \PGL(2,q)$, and hence $A\times \langle\iota\rangle\leq \PGL(2,q)$. Since $\cent{\PGammaL(2,q)}{A_0}=\cent{\PGL(2,q)}{A_0}$ is a maximal torus of $\PGL(2,q)$ of order $q-\varepsilon$, we obtain that $A\times\langle\iota\rangle$ is cyclic. This implies that $A$ has odd order, which is a contradiction.

\smallskip
\noindent\textbf{Case 2.} $G$ is of product action type.
\smallskip

In particular, $N\unlhd G\leq H\wr \Sym(\ell)$ with $\ell\geq 2$, $H$ an almost simple group with socle $T$ and with $N\cong T^\ell$. Let $N=T_1\times \cdots \times T_\ell$ with $T_i\cong T$ for every $i\in \{1,\ldots,\ell\}$. 

 For every $i\in \{1,\ldots,\ell\}$, let $B_i=B\cap T_i$. From the structure of primitive permutation groups of product action type~\cite{ONAN}, we have $B\cap N=B_1\times \cdots \times B_\ell$ with $|B_1|=\cdots =|B_\ell|>1$. As $N$ is transitive, we have $G=NB$. It follows that $B$ acts transitively by conjugation on the set $\{T_1,\ldots,T_\ell\}$ and thus on $\{B_1,\ldots,B_\ell\}$ and, since $R\unlhd B$, also on $\{(B_1\cap R),\ldots,(B_\ell\cap R)\}$. In particular, the groups $B_1\cap R,\ldots,B_\ell\cap R$ are pairwise conjugate in $B$ and pairwise intersect trivially. Since $\ell\geq 2$, it follows from Lemma~\ref{basic-sec3}(\ref{bubu}) that $B_1\cap R=\cdots =B_\ell\cap R=1$.  As $|B:R|= 2$, we have $|B_i|= 2$ for every $i\in \{1,\ldots,\ell\}$ and hence $B\cap N=B_1\times \cdots \times B_\ell$ is an  elementary abelian $2$-group. 

Since $B=\norm G{{B\cap N}}$, it follows that $B\cap N=\norm N {B\cap N}$ and $B_i=\norm {T_i}{B_i}$ for every $i\in \{1,\ldots,\ell\}$. Since $B_i$ has order $2$ and is self-normalising in $T_i$, it must be a Sylow $2$-subgroup of $T_i$. This is a contradiction since a non-abelian simple group cannot have a Sylow $2$-subgroup of order $2$ (see~\cite[7.2.1]{KurStell} for example).
\end{proof}

We now prove Theorem~\ref{theoPrimitive} which we restate, for convenience.

\smallskip

\noindent\textbf{Theorem~\ref{theoPrimitive}.} \emph{Assume Notation~$\ref{notation2}$. Let $X=B/N$ be a quotient of $B$ and let $G$ be a primitive permutation group with point-stabiliser $X$. Then $G$ has a unique minimal normal subgroup. Moreover, either $G$ is of affine type or $y\in N$.}

\vspace{-.1cm}

\begin{proof}
Observe that $X$ is solvable. The finite primitive permutation groups with solvable point-stabilisers are classified in~\cite{LiZ2011}. From~\cite[Theorem~$1.1$]{LiZ2011} we see that $G$ is either of affine,  almost simple or product action type. In particular, $G$ has a unique minimal normal subgroup. Suppose that $y\notin N$. We show that $G$ is of affine type.

By Lemma~\ref{lemma:Fr} we may assume that $X$ is non-abelian. If $N\nleq C$ then $B=NC$ and $X=B/N\cong C/(C\cap N)$ is abelian, which is a contradiction. We may thus assume that $N\leq C$. 

If $N\nleq R$ then $B=NR$ and $X=B/N\cong R/(R\cap N)$. Since $X$ is non-abelian and $y\notin N$, it follows that $X$ is a generalised dicyclic group and hence $G$ is of affine type by Lemma~\ref{prim2}. We may thus assume that $N\leq R$ and hence $N\leq C\cap R=A$.

As $y\notin N$, we obtain that $R/N$ is isomorphic to the generalised dicyclic group $\Dic(A/N,yN,xN)$, $X=B/N$ is isomorphic to a group as in Notation~\ref{notation2} and hence $G$ is of affine type by Lemma~\ref{hell}.
\end{proof}

\end{document}